\newtheorem{theorem}{Theorem}[section]
\newtheorem{claim}[theorem]{Claim}
\newtheorem{lemma}[theorem]{Lemma}
\newtheorem{remark}[theorem]{Remark}
\newtheorem{proposition}[theorem]{Proposition}
\newtheorem{corollary}[theorem]{Corollary}
\newtheorem{fact}[theorem]{Fact}
\newtheorem{definition}[theorem]{Definition}
\newtheorem{question}[theorem]{Question}
\DeclareMathOperator{\len}{len}
\DeclareMathOperator{\id}{id}
\DeclareMathOperator{\crit}{crit}
\DeclareMathOperator{\Ult}{Ult}
\DeclareMathOperator{\Col}{Col}
\DeclareMathOperator{\otp}{otp}
\DeclareMathOperator{\cf}{cf}
\DeclareMathOperator{\acc}{acc}
\DeclareMathOperator{\Hull}{Hull}
\DeclareMathOperator{\image}{``}
\DeclareMathOperator{\stem}{stem}
\DeclareMathOperator{\sym}{sym}
\DeclareMathOperator{\fix}{fix}
\newcommand{\forces}{\mathrel{\Vdash}}
\newcommand{\ZF}{\mathsf{ZF}}
\newcommand{\ZFC}{\mathsf{ZFC}}
\newcommand{\GCH}{\mathsf{GCH}}
\newcommand{\DC}{\mathsf{DC}}
\newcommand{\HS}{\mathsf{HS}}
\newcommand{\AD}{\mathsf{AD}}
\newcommand{\Ord}{\mathrm{Ord}}
\newcommand{\HOD}{\mathrm{HOD}}
\newcommand{\Reg}{\mathrm{Reg}}
\newcommand{\power}{\mathcal P}
\newcommand{\cW}{\mathcal W}
\newcommand{\cU}{\mathcal U}
\newcommand{\cF}{\mathcal F}
\newcommand{\cG}{\mathcal G}
\def\ssubset{\mathrel{\vtop{\ialign{##\crcr$\hfil\subset\hfil$\crcr\noalign{\kern1.5pt\nointerlineskip}$\hfil\widetilde{}\hfil$\crcr\noalign{\kern1.5pt}}}}\vspace*{-0.3\baselineskip}}
\subjclass[2020]{Primary 03E25; Secondary 03E35,03E55,03E45}
\keywords{Symmetric extensions, supercompact Radin forcing, critical cardinals}
\date{11 December, 2024}
\title{The first measurable can be the first inaccessible cardinal}
\author[M.~Gitik]{Moti Gitik}
\address[Moti Gitik]{School of Mathematical Sciences \\
Tel Aviv University \\
Tel Aviv 69978 \\
Israel}
\email{gitik@tauex.tau.ac.il}
\author[Y.~Hayut]{Yair Hayut}
\address[Yair Hayut]{Einstein Institute of Mathematics \\
Edmond J. Safra Campus \\
The Hebrew University of Jerusalem \\
Givat Ram. Jerusalem, 9190401, Israel}
\email{yair.hayut@mail.huji.ac.il}
\author[A.~Karagila]{Asaf Karagila}
\address[Asaf Karagila]{School of Mathematics\\
  University of Leeds\\
  Leeds, LS2~9JT, UK}
\email{karagila@math.huji.ac.il}
\begin{document}
\begin{abstract}
In \cite{SmallMeasurableCardinals} the second and third authors showed that if the least inaccessible cardinal is the least measurable cardinal, then there is an inner model with $o(\kappa)\geq2$. In this paper we improve this to $o(\kappa)\geq\kappa+1$ and show that if $\kappa$ is a $\kappa^{++}$-supercompact cardinal, then there is a symmetric extension in which it is the least inaccessible and the least measurable cardinal.
\end{abstract}
\maketitle
\section{Introduction}
Large cardinal axioms form the yardstick with which we measure the consistency strength of various mathematical statements. In other words, given a mathematical statement, we can use large cardinals to give lower and upper bounds as to the question ``how strong of a mathematical foundation is required to prove the statement is possibly true?''. Perhaps the most famous large cardinal axiom is the one positing the existence of an inaccessible cardinal, or a ``Tarski--Grothendieck universe''.\footnote{Without the axiom of choice the many definitions of an inaccessible cardinal which are equivalent in $\ZFC$ will no longer need to be equivalent (see \cite{Inaccessible} for details). In this work ``$\kappa$ is inaccessible'' means that $V_\kappa\models\ZF_2$, that is $\ZF$ formulated in second-order logic, or equivalently there is no $x\in V_\kappa$ and a function $f\colon x\to\kappa$ whose image is a cofinal subset of $\kappa$.}

Measurable cardinals, in the standard context of set theory, where the axiom of choice is taken as true, can be defined by two equivalent formulations: the existence of ultrafilters;\footnote{A cardinal $\kappa$ is measurable if and only if there is a non-principal $\kappa$-complete ultrafilter on $\kappa$.} or as the critical points of elementary embeddings. The equivalence, which relies heavily on {\L}o\'s' theorem, can fail without the axiom of choice. In the 1960s, Jech proved that $\omega_1$, the least uncountable cardinal which can never be a critical point of an elementary embedding,\footnote{At least not if we require the embedding to be definable or at least amenable, that is.} could be a measurable cardinal \cite{Jech}.

A sequence of results under the axiom of determinacy, starting with Solovay's proof of the measurability of $\omega_1$ and $\omega_2$, and reaching its final form in Steel's theorem that assuming $V=L(\mathbb R)$, every uncountable regular cardinals below $\Theta$ is measurable (\cite[Theorem~8.27]{SteelHandbook}), shows that measurable cardinals are common in some natural models of $\ZF$.

In \cite{CriticalCardinals}, the second and third authors isolated the notion of a ``critical cardinal'' which is a critical point of an elementary embedding, and studied some of the consequences of the existence of critical cardinals without the axiom of choice.

That work led to the question of how small can the least measurable cardinal be, if the axiom of choice is allowed to fail. Of course, it can be the least uncountable cardinal, but that is not a satisfying answer to the question. Specifically, we want to understand this phenomenon within the context of the large cardinal hierarchy, whereas $\omega_1$ is a successor cardinal. In \cite{SmallMeasurableCardinals} the two authors show that the least Mahlo cardinal\footnote{This is ``the next step'' after inaccessibility, in the sense that the set of inaccessible cardinals below a Mahlo cardinal is not negligible, i.e.\ \emph{stationary}.} can be the least measurable cardinal as well, and that the large cardinal strength of this assertion is merely ``there exists a measurable cardinal'', that is to say that in order to produce a model where the least measurable cardinal is the least Mahlo cardinal, it was enough to start with a model in which a single measurable cardinal exists. However, in trying to reduce the measurability even lower we run into an intriguing situation. If the least measurable cardinal is also the least inaccessible cardinal, then we must have began with a model with many measurable cardinals.\footnote{In the previous paper the lower bound was $o(\kappa)\geq 2$.}

This means that producing a model where the least measurable cardinal is the least inaccessible cardinal requires us to work harder and start with stronger assumptions. The difficulty does not lie in the fact that this the first inaccessible, but rather in the fact that the cardinal is not Mahlo. Once the set of inaccessible, or even regular, cardinals is negligible (namely, non-stationary), all manner of difficulties start to arise. This phenomenon, and therefore the question that we are concerned with here, is not unique to situations where the axiom of choice fails. For example, even in $\ZFC$ it is not known if the tree property can hold at a non-Mahlo weakly inaccessible cardinal, whether it is consistent that there is no Suslin tree on a non-Mahlo inaccessible cardinal, and it is known that the failure of diamond principles at the least inaccessible cardinal also has a perhaps-surprisingly strong large cardinal lower bound.

If one replaces a (strongly) inaccessible by a weakly inaccessible, then starting from $\AD+V=L(\mathbb R)$, Apter constructed a model of $\ZF$ in which the least measurable and the least weakly inaccessible coincide (see \cite{Apt}). Namely, basing on the aforementioned result of Steel \cite{SteelHandbook}, he uses the Prikry forcing (in a $\ZF$ context) to turn any given set $A\subseteq \Theta$ of measurable cardinals into singulars and preserving measurability of the rest. However, weakly inaccessible cardinals, in particular in the context of $\ZF$, tend to feel quite unsatisfactory for the same reasons that $\omega_1$ does not answer our question.

\subsection{In this paper}
In this work we establish an upper bound for the statements ``there is a measurable cardinal that is not a Mahlo cardinal'' and ``the least inaccessible cardinal is the least measurable cardinal'', as well as far improved lower bounds. Indeed, we show that if $\kappa$ is a $\kappa^{++}$-supercompact cardinal, then there is a symmetric extension in which it is a non-Mahlo measurable cardinal, and a further symmetric extension in which it is also the least inaccessible cardinal. In both of these, we show that $\DC_{<\kappa}$ holds, and $\ZFC$ holds below $\kappa$. We also show that the lower bound required for these results is at least as high as ``there is an inner model with $o(\kappa)\geq\kappa+1$''.

We use supercompact Radin forcing to construct the symmetric extension, and the Mitchell covering lemma (for ``there is no inner model with $o(\alpha)=\alpha^{++}$'') to provide the lower bounds. Some questions and conjectures are given at the end as well.

\subsection{Technical preliminaries}
We assume that the readers are familiar with the techniques of forcing and symmetric extensions, but we include a brief outline of the latter. Fixing a forcing notion $\mathbb P$, an automorphism, $\pi$, of $\mathbb P$ acts on the $\mathbb P$-names in a recursive definition given by \[\pi\dot x=\{\langle\pi p,\pi\dot y\rangle\mid\langle p,\dot y\rangle\in\dot x\}.\] Unsurprisingly, as $\forces$ is defined from the order, $p\forces\varphi(\dot x)$ if and only if $\pi p\forces\varphi(\pi\dot x)$.

Fixing a group of automorphisms, $\cG$, we define $\sym(\dot x)$, for a $\mathbb P$-name $\dot x$, as $\{\pi\in\cG\mid\pi\dot x=\dot x\}$. Since interpretation of names by a generic filter, $G$, satisfies the equation \[(\pi^{-1}\dot x)^G=\dot x^{\pi\image G},\] we get that $\dot x^G=\dot x^{\pi\image G}$ when $\pi\dot x=\dot x$.

We want, therefore, to isolate a notion which allows us to say when a $\mathbb P$-name is interpreted the same way by ``most'' generic filters. Towards that goal, $\cF$ is a \emph{filter of subgroups} if it is a non-empty collection of subgroups of $\cG$ which is closed under supergroups and finite intersections. We say that it is a \emph{normal} filter of subgroups if whenever $H\in\cF$ and $\pi\in\cG$, then $\pi H\pi^{-1}\in\cF$ as well.\footnote{The clash in terminology with normal filters in the sense of diagonal intersections is known. We will refer to ultrafilters on ordinals as measures whenever any confusion may arise.}

Let a $\mathbb P$-name, $\dot x$, be called \emph{symmetric} if $\sym(\dot x)\in\cF$, and if this property holds hereditarily for all names appearing in $\dot x$, we say that it is \emph{hereditarily symmetric}. The class of hereditarily symmetric names is denoted by $\HS$ and if $G$ is a generic filter, $\HS^G=\{\dot x^G\mid\dot x\in\HS\}$ is a transitive model of $\ZF$ intermediate between $V$ and $V[G]$, and we refer to it as a \emph{symmetric extension}.

We say that a class $M$ is \emph{$\kappa$-closed} (in $V$, or in a larger class) if whenever $\gamma<\kappa$ and $f\colon\gamma\to M$, then $f\in M$. We say that a forcing notion is $\kappa$-closed if every descending sequence of fewer than $\kappa$ conditions has a lower bound, and a tree is $\kappa$-closed if it is $\kappa$-closed as a forcing notion (which necessitate reversing its order in our convention). In our context, $\langle T,<\rangle$ is a tree if it is a well-founded partial order with a minimum element, such that whenever $t_0,t_1<t$ we have that $t_0$ and $t_1$ are comparable.

In these symmetric extensions we often want to preserve some fragments of the axiom of choice, and in this work we will be primarily focused on \emph{Dependent Choice}. Amongst the many interesting equivalences, $\DC_\kappa$ can be formulated as ``Every $\kappa$-closed tree has a maximal node or a chain of type $\kappa$'', and $\DC_{<\kappa}$ means that for every $\lambda<\kappa$, $\DC_\lambda$ holds.

For a set of ordinals, $x$, we write $\acc x$ to denote its accumulation points, i.e.\ $\{\alpha\mid\alpha=\sup\alpha\cap x\}$. We denote by $\pi_x$ the Mostowski collapse of $x$, i.e.\ the order isomorphism with its order type. And we say that $\xi$ is a successor in $x$ if $\pi_x(\xi)$ is a successor ordinal. For a set of ordinals $x \subseteq \lambda$, we can naturally extend $\pi_x$ to $\mathcal{P}(\lambda)$ and even $\power(\power(\lambda))$. For $a\in\power(\lambda)$ we define $\pi_x(a) = \pi_x\image(x\cap a)$. For $A\in\power(\power(\lambda))$, we let $\pi_x(A) = \pi_x\image A$.

Finally, for $x,y\in\power_\kappa\lambda$ we write $x\ssubset y$ to mean $x\subseteq y$ and $|x|<|y\cap\kappa|$.\footnote{In our context, $y\cap\kappa$ will be a cardinal.}

\section{Measurable, but not Mahlo}\label{section:upper-bound}
This section will be devoted for the proof of the following theorem.
\begin{theorem}[$\ZFC$]\label{thm:non-mahlo}
Assume $\GCH$ and let $\kappa$ be $\kappa^{++}$-supercompact. Then, there is a symmetric extension in which $\kappa$ is non-Mahlo inaccessible and there is a normal measure on $\kappa$.
\end{theorem}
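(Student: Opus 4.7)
The plan is to carry out a symmetric extension over supercompact Radin forcing $\mathbb{R}_u$, where $u$ is a coherent sequence of normal fine measures extracted from a $\kappa^{++}$-supercompactness embedding $j\colon V\to M$. I would take $u$ of length around $\kappa^+$, coherent enough that in $V[G]$ the Radin-generic club $C_G\subseteq\kappa$ has order type $\len(u)$, with almost every point of $C_G$ singular in $V[G]$ (of a cofinality determined by the inherited tail of $u$), while $\kappa$ itself remains strongly inaccessible and carries a normal measure $U$ obtained by projecting $j$ through the ultrafilter on $\kappa$ derived from $j$. The starting GCH, combined with the standard Radin machinery, ensures the cardinal arithmetic below $\kappa$ is well-behaved and $U$ exists.

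For the symmetric layer, $\mathcal{G}$ will be the natural group of automorphisms of $\mathbb{R}_u$ acting on the tree-parts of conditions above each stem, i.e.\ permuting the measure-one ``futures'' while fixing stems. The filter $\mathcal{F}$ of subgroups is generated by
\[
H_\alpha = \{\pi\in\mathcal{G}\mid \pi p = p\text{ for every }p\text{ whose stem lies below }\alpha\},
\]
for $\alpha<\kappa$, which is easily seen to be normal. A hereditarily symmetric name then depends on $G$ only through a bounded initial segment of $C_G$, and the usual $\omega$-length symmetric-approximation argument combined with the Prikry property of $\mathbb{R}_u$ gives that bounded subsets of $\kappa$ in $M:=\HS^G$ already lie in a small subforcing extension. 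From this $\kappa$ stays inaccessible in $M$, $\ZFC$ holds below $\kappa$, and $\DC_{<\kappa}$ is forced by the $\kappa$-closure of the tails.

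Non-Mahloness follows from the fact that $C_G$ has a symmetric name (each initial segment $C_G\cap\alpha$ is fixed by $H_\alpha$) so $C_G\in M$, and by the arrangement of $u$ its points are singular in $V[G]$, hence also in $M$ since singularity is downward absolute and $M$ computes cofinalities at least as high as $V[G]$ does on the relevant sets. So $C_G$ witnesses that the set of regular cardinals below $\kappa$ is non-stationary in $M$. For the measure, I would argue that every $A\in\power(\kappa)\cap M$ has a hereditarily symmetric name fixed by some $H_\alpha$ and thus is equivalent modulo $\mathbb{R}_u$-forcing to a set coded from a $\mathbb{R}_u\restriction\alpha$-name, so that membership in $U$ is determined in a way that survives symmetrization; consequently $U\cap M$ is a normal $\kappa$-complete measure on $\kappa$ in $M$.

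The main obstacle will be the tension between (b) preserving the measure and (c) killing Mahloness. A richer automorphism group makes it easier to show stationarity is destroyed but risks fragmenting $U$ (failure of completeness on a symmetric partition), while a meager group leaves $\kappa$ Mahlo. The delicate point is to prove a Prikry-style lemma adapted to \emph{hereditarily symmetric} statements, so that decisions about $U$-membership and about stationarity can both be pushed down to bounded stems uniformly. Verifying this symmetric Prikry lemma, together with the preservation of $\DC_{<\kappa}$, is where I expect nearly all of the technical weight of the proof to sit.
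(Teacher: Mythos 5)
There are genuine gaps here, and they sit exactly at the ``tension'' you flag at the end --- your proposal names the obstacle but does not contain the idea that resolves it. First, your symmetric system cannot do both jobs at once. If, as you claim, every hereditarily symmetric subset of $\kappa$ is captured by a bounded initial segment of the forcing \emph{over $V$}, then the club of newly singularized cardinals is not in $\HS^G$ and Mahloness is not destroyed; if instead the whole club is symmetric (and under your group, which fixes stems and only permutes measure\--one ``futures'', the canonical name for the entire generic club \emph{is} fixed by every automorphism, so the symmetrization is essentially vacuous and $\HS^G=V[G]$), then you are in the full supercompact Radin extension, where measurability of $\kappa$ is not available: a repeat point for a coherent sequence of measures on $\power_\kappa\kappa^+$ requires length $(2^{\kappa^+})^+=\kappa^{+3}$ under $\GCH$, which $\kappa^{++}$\--supercompactness does not supply --- and your proposed length ``around $\kappa^+$'' is too short even for a repeat point on the normal side, which needs $(2^\kappa)^+=\kappa^{++}$. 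The paper's resolution is a genuinely two\--tier construction that your proposal is missing: the automorphisms permute $\kappa^+$ inside the supercompact coordinates $a\in\power_\kappa\kappa^+$ (not the measure\--one sets), so that the \emph{full projected normal club} $\overline C\subseteq\kappa$ survives into $W_1$ together with only \emph{bounded} fragments of the supercompact club, and ``added by small forcing'' is proved over the intermediate model $V[\overline C]$ --- where $\kappa$ is already measurable by Radin's repeat\--point theorem --- not over $V$. Relatedly, the measure cannot be your ground\--model $U$ derived from $j$: supercompact Radin forcing is not small, $U$ does not measure the new subsets of $\kappa$, and $U\cap M$ has no reason to be an ultrafilter in $M$; one must instead take a normal measure of $V[\overline C]$ and extend it to $W_1$ by Jech's small\--forcing argument.

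Second, your non\--Mahloness argument has the absoluteness backwards: singularity is \emph{upward} absolute, not downward, so ``singular in $V[G]$, hence singular in $M$'' for a submodel $M\subseteq V[G]$ is unjustified, and the hedge that $M$ ``computes cofinalities at least as high as $V[G]$'' is the wrong direction. One must actually place the witnessing cofinal sequences inside the symmetric model. The paper does this by showing $\power_{a\cap\kappa}a^{V[C_G]}\in W_1$ for each $a\in C_G$ (bounded supercompact data is fixed by $H_{\sup a}$) and then running the singularization argument --- which uses the discreteness of the measures $\{W(\alpha,\xi)\mid\xi<o^{\cW}(\alpha)\}$ and an $\omega$\--length recursion through the supercompact club below $\alpha$ --- entirely with objects available in $W_1$. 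Note also that this singularization of \emph{every} accumulation point is a feature of the supercompact Radin club (it collapses $\alpha^+$ locally); the normal Radin club alone leaves points of high Mitchell order regular, which is precisely why the extra, carefully bounded, supercompact information must be retained.
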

The idea of the proof of this theorem is to work in an intermediate model between the generic extension by the supercompact Radin club and the standard Radin club. Let us begin by recalling some basic facts about the supercompact Radin forcing and its connection (in this specific case) with the standard Radin forcing.
\subsection{Radin Forcing}
We will follow Krueger's presentation of supercompact Radin forcing (\cite{Krueger}). This presentation translates the classical construction of Foreman and Woodin (\cite{ForemanWoodin}) to the context of coherent sequences of supercompact measures, for which we have a clear presentation of the relevant automorphisms.

Since $\kappa$ is $\kappa^{++}$-supercompact, by Proposition~2.2 of \cite{Krueger}, there are sequences $o^\cW \colon \kappa+1 \to \Ord$, with $o^\cW(\kappa) = \kappa^{++}$, and a \emph{coherent sequence} of measures, $\cW = \langle W(\alpha, i) \mid \alpha \leq \kappa, i < o^{\cW}(\alpha)\rangle$. In our context this means that:\footnote{In \cite{Krueger} two additional conditions are required that hold automatically in our context.}
\begin{enumerate}
\item For every $\alpha \leq \kappa$ and $i < o^{\cW}(\alpha)$, $W(\alpha, i)$ is a normal measure on $\power_\alpha\alpha^+$.
\item For every $\alpha \leq \kappa$ and $i < o^{\cW}(\alpha)$, \[ j_{W(\alpha,i)}(\cW)(\alpha) = \cW(\alpha)\restriction i=\langle W(\alpha,k)\mid k<i\rangle,\]
where $\mathcal{W}(\alpha):=\langle W(\alpha,i)\mid i < o^{\mathcal{W}}(\alpha)\rangle$.
\end{enumerate}
Given a coherent sequence $\cW$ the \emph{supercompact Radin forcing}, $\mathbb{R}(\cW)$, as defined by Krueger, see \cite[Section~3]{Krueger}. For the completeness of the paper we will provide a definition of the forcing in our specific case.

A condition $p\in \mathbb{R}(\mathcal{W})$ is a finite sequence of the form $\langle d_0, d_1,\dots, d_{n-1}, A\rangle$ where:
\begin{enumerate}
\item The set $A$ lies in $\bigcap_{\zeta < o^{\cW}(\kappa)} \cW(\kappa, \zeta)$,
\item $n \geq 0$, and for every $i < n$, $d_i$ is either a member of $\power_\kappa\kappa^{+}$ or a pair of the form $\langle a_i, A_i\rangle$.
\begin{itemize}
\item If $d_i \in \power_\kappa\kappa^{+}$ we set $a_i = d_i$ and $\rho_i = a_i\cap \kappa$ and require $\rho_i\in \kappa$.
\item Otherwise, $d_i = \langle a_i, A_i\rangle$, we set $\rho_i = a_i \cap \kappa$ and require $\rho_i \in \kappa$. Moreover, $A_i \in \bigcap_{\zeta < o^{\cW}(\rho_i)} \mathcal{W}(\rho_i, \zeta)$.
\end{itemize}
\end{enumerate}
For a condition $p$ we will denote by $a_i^p$, $d_i^p$ and $A_i^p$ the corresponding objects in $p$ and denote $n$ by $\len p$. We also set $\rho_n = \kappa$, $a_n^p = \kappa^{+}$ and $A_n = A$.

In order to define the extension of a condition, let us first define pure extension and one-point extension. We say that $p$ is a pure extension of $q$, or $p \leq^* q$ if $\len p = \len q$ and for every $i$, $a_i^p = a_i^q$.  For $\eta \in A_i$ for some $i\leq n$, we let $p^\smallfrown \eta$ (the one-step extension of $p$ by $\eta$) to be the condition defined as either
\begin{itemize}
\item $\langle d_0,\dots, d_{i-1}, \langle \pi_{a_i}^{-1}(\eta), \pi_\eta(A_i)\rangle, d_i,\dots, d_{n-1}, A\rangle$, if $o^{\cW}(\eta \cap \kappa) > 0$, or
\item $\langle d_0,\dots, d_{i-1}, \pi_{a_i}^{-1}(\eta), d_i,\dots, d_{n-1}, A\rangle$, if $o^{\cW}(\eta \cap \kappa) = 0$.
\end{itemize}
We say that $\eta$ is a legitimate extension, if $p^\smallfrown\eta$ is a condition. We let $p \leq q$ if $p$ can be obtained from $q$ using a sequence of legitimate one-point extensions and direct extensions. We will work under the implicit assumption that if $a\in\power_\alpha\alpha^+$, then $a\cap\alpha$ is a cardinal and $|a|=|a\cap\alpha|^+$, note that these sets form a large set in all of our measures, and so we can simply restrict our conditions to these sets. For the basic properties of the forcing, see \cite[Section~3]{Krueger}.

For a normal measure over $\power_\kappa \kappa^{+}$, $W$, let us denote by $W \restriction \kappa$ the projection of this measure to a normal measure on $\kappa$:
\[W \restriction \kappa = \{A \subseteq \kappa \mid \exists B \in W,\, A = \{x \cap \kappa \mid x \in B\}\}.\]
While this projection always induces a normal measure on $\kappa$, the coherence of the sequence of projections is more subtle.
\begin{lemma}\label{lemma:projection-coherence}
Assume $\GCH$. Let $\cW$ be a coherent sequence of $\power_\kappa\kappa^{+}$-supercompact measures with $o^{\cW}(\alpha) < \alpha^{++}$ for all $\alpha < \kappa$. Then, $\langle W(\alpha, i) \restriction \alpha \mid \alpha \leq \kappa, i < o^{\cW}(\alpha)\rangle$ is coherent.
\end{lemma}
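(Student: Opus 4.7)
The approach is to fix $\alpha \leq \kappa$ and $i < o^\cW(\alpha)$, set $W = W(\alpha, i)$ and $U = W \restriction \alpha$, and directly verify the coherence condition $j_U(\vec{U})(\alpha) = \langle U(\alpha, k) \mid k < i\rangle$ for the projected sequence $\vec{U}$. The tools are the coherence of $\cW$ and {\L}o\'s' theorem in the two ultrapowers, together with the canonical factor embedding $k_0 \colon \Ult(V, U) \to \Ult(V, W)$ satisfying $k_0 \circ j_U = j_W$ and $k_0(\alpha) = \alpha$, which comes from the seed identifications $\alpha = [\id]_U = [y \mapsto y \cap \alpha]_W$.

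The first part of the argument is to verify the length: $j_U(o^\cW)(\alpha) = i$, so that $j_U(\vec{U})(\alpha)$ is indeed a sequence of length $i$. Coherence of $\cW$ gives $j_W(o^\cW)(\alpha) = i$, and unfolding the projection $U = W \restriction \alpha$ via {\L}o\'s translates this to $\Ult(V, U)$. The second part handles the entries: for each $k < i$, I would show $j_U(\vec{U})(\alpha, k) = U(\alpha, k)$ as normal measures on $\alpha$. Since $P(\alpha)^V = P(\alpha)^{\Ult(V, U)}$ and normal measures on $\alpha$ are determined by their subsets of $\alpha$, it suffices to verify, for $B \in P(\alpha)^V$:
\[B \in j_U(\vec{U})(\alpha, k) \iff \{\xi < \alpha : B \cap \xi \in U(\xi, k)\} \in U \iff \{y : B \cap (y \cap \alpha) \in W(y \cap \alpha, k) \restriction (y \cap \alpha)\} \in W,\]
the first equivalence by {\L}o\'s in $U$ together with the standard computation $[\xi \mapsto B \cap \xi]_U = B$, and the second by the definition of the projection $U = W \restriction \alpha$. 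Applying {\L}o\'s in $\Ult(V, W)$ with the coherence of $\cW$ at $k < i$ (which yields $j_W(\cW)(\alpha, k) = W(\alpha, k)$, and hence $[y \mapsto W(y \cap \alpha, k) \restriction (y \cap \alpha)]_W = W(\alpha, k) \restriction \alpha = U(\alpha, k)$), this final condition becomes $B \in U(\alpha, k)$.

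The main obstacle is the length step when $\alpha = \kappa$, since then $i$ can range up to $\kappa^{++}$ while the critical point of $k_0$ is bounded above by $j_U(\kappa)$, which is strictly less than $\kappa^{++}$ under $\GCH$. The hypothesis $o^\cW(\alpha) < \alpha^{++}$ for $\alpha < \kappa$, combined with $\GCH$ and the $\kappa^+$-closure of $\Ult(V, W)$, is what allows one to represent every relevant ordinal $i < o^\cW(\kappa)$ correctly in $\Ult(V, U)$ via functions $\kappa \to \Ord$, so that the {\L}o\'s identifications deliver the desired equality of lengths.
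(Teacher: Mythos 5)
Your overall architecture is the same as the paper's: fix $\alpha$ and $i$, pass to the factor embedding $k_0\colon N=\Ult(V,U)\to M=\Ult(V,W)$ with $k_0\circ j_U=j_W$, and transfer the coherence of $\cW$ at $(\alpha,i)$ down to $N$. However, the step you describe as ``unfolding the projection via {\L}o\'s'' hides the entire mathematical content of the lemma, and your proposed resolution of the obstacle is circular. What {\L}o\'s actually gives is $k_0\bigl([\xi\mapsto o^{\cW}(\xi)]_U\bigr)=[y\mapsto o^{\cW}(y\cap\alpha)]_W=j_W(o^{\cW})(\alpha)=i$; to conclude the needed equality $j_U(o^{\cW})(\alpha)=[\xi\mapsto o^{\cW}(\xi)]_U=i$ you must show that $k_0$ \emph{fixes} this ordinal, and a priori it need not --- if it did not, the projected sequence would simply fail to be coherent at $(\alpha,i)$, which is exactly what happens for sequences of length $\geq\alpha^{++}$ (see the remark following the lemma). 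The missing computation, which is the whole of the paper's proof, is: (a) $\crit k_0\geq(\alpha^{++})^N$, obtained by checking that $k_0(\alpha)=\alpha$ (the seed identification you already noted) and $k_0(\alpha^+)=\alpha^+$ (both $N$ and $M$ compute $\alpha^+$ correctly, and $k_0([\,\zeta\mapsto\zeta^+]_U)=j_W(\zeta\mapsto\zeta^+)(\alpha)=\alpha^+$), so that $\crit k_0$, being an $N$-cardinal above $\alpha^+$, is at least $(\alpha^{++})^N$; and (b) $[\xi\mapsto o^{\cW}(\xi)]_U<[\xi\mapsto\xi^{++}]_U=(\alpha^{++})^N$, which is precisely where the hypothesis $o^{\cW}(\xi)<\xi^{++}$ enters. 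Together these give $i=[\xi\mapsto o^{\cW}(\xi)]_U<\crit k_0$. Asserting that the hypothesis ``allows one to represent every relevant ordinal $i$ correctly in $\Ult(V,U)$'' is a restatement of the conclusion, not an argument; and the bound on $\crit k_0$ that you quote (that it is at most $j_U(\kappa)<\kappa^{++}$) points in the wrong direction --- what is needed is a \emph{lower} bound on $\crit k_0$ together with an upper bound on the ordinal representing $i$.

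A second, smaller gap: your displayed equivalence for the entries treats the index $k$ as if it were represented by a constant function (writing $U(\xi,k)$ and $W(y\cap\alpha,k)$), which is legitimate only for $k<\alpha=\crit j_U$; since $k$ can range up to $i<\alpha^{++}$, for $k\geq\alpha$ you must use a representing function $c_k$ with $[c_k]_U=k$, and identifying $[y\mapsto c_k(y\cap\alpha)]_W$ with $k$ again requires $k<\crit k_0$. Once the computation in (a)--(b) is in place, both the length step and the entries go through; the paper handles the entries by applying $k_0$ to each $\iota(\cW)(\alpha,\zeta)\restriction\alpha$ for $\zeta<i<\crit k_0$ and using that a normal measure on $\alpha$ is determined by its trace on $\power(\alpha)^V\subseteq N$, which is fixed pointwise by $k_0$. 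You should make the critical point computation the centerpiece of the proof rather than a remark at the end.
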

\begin{proof}
Let $\iota$ be the ultrapower embedding by $W(\alpha,i)\restriction \alpha$, $j$ the ultrapower embedding by $W(\alpha, i)$, and $k$ be the quotient map. Namely, $k([f]) = j(f)(\alpha)$. We have the following commutative diagram
\begin{center}
\begin{tikzcd}
V\arrow[r, "\iota"] \arrow[rd, "j"]& N=\Ult(V, W(\alpha, i)\restriction\alpha)\arrow[d, "k"] \\
 & M=\Ult(V, W(\alpha, i))
\end{tikzcd}
\end{center}
The map $k$ must have a critical point (as for example the $V$-cofinality of $\iota(\alpha^+)$ is $\alpha^+$ while the $V$-cofinality of $j(\alpha^+)$ is $\alpha^{++}$). The critical point of $k$ must be an $N$-cardinal which is not in the image of $k$. Therefore $\crit k\geq\crit j=\alpha$. Equality, however, is impossible, as for $\id\colon \alpha\to \alpha$, $j(\id)(\alpha)=\alpha$, and it cannot be $\alpha^+$ as for $s\colon \alpha\to \alpha$, $\forall \zeta,\, s(\zeta) = \zeta^+$, $[s] = \alpha^+$ (using the fact that $N$ computes $\alpha^+$ correctly) and clearly $j(s)(\alpha) = \alpha^{+}$.

Therefore, $\crit k \geq (\alpha^{++})^N$. A simple computation shows that $|(\alpha^{++})^N|^V = \alpha^+$ and thus it has to move under $k$, so $\crit k = (\alpha^{++})^N$.

Since $o^{\cW}(\zeta) < \zeta^{++}$ for all $\zeta<\alpha$, we have that $\iota(o)(\alpha) = i < (\alpha^{++})^N = \crit k$. In particular, for every $\zeta < i$, \[k(\iota(W)(\alpha,\zeta) \restriction \alpha) = j(W)(\alpha, \zeta) \restriction \alpha = W(\alpha, \zeta) \restriction \alpha,\]
where the last equality follows from the fact that $\power(\alpha) \subseteq N$ and $\crit k > \alpha$.
\end{proof}
We will use $\overline\cW$ to denote the sequence of projected measures.
\begin{remark}
\cref{lemma:projection-coherence} does not make sense for longer sequences. Under $\GCH$, $2^{\kappa} = \kappa^{+}$ and thus there is no Mitchell increasing sequence of normal measures on $\kappa$ of length $> \kappa^{++}$, but a coherent sequence of measures on $\power_\kappa\kappa^+$ can be as long as $(2^{\kappa^{+}})^+ \geq \kappa^{+3}$.
\end{remark}

\begin{lemma}
In the Radin generic extension by $\mathbb{R}(\cW)$, $\kappa$ remains inaccessible.
\end{lemma}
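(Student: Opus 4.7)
The plan is to show that $\kappa$ remains a regular strong limit in $V[G]$, where $G$ is $\mathbb{R}(\cW)$-generic. Both halves will reduce, in a now-standard way for Prikry-type forcings, to two structural properties of $\mathbb{R}(\cW)$ proved in \cite{Krueger}: the \emph{Prikry property}, that for any condition $p$ and any statement $\varphi$ in the forcing language some $p^*\leq^* p$ decides $\varphi$; and the fact that the direct extension order $\leq^*$, restricted below a condition whose last lower entry $a_{n-1}$ satisfies $a_{n-1}\cap\kappa=\rho$, is at least $\rho^+$-closed.

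From these two facts the first step is to argue that $\mathbb{R}(\cW)$ adds no bounded subsets of $\kappa$. Given a condition $p$ and a name $\dot X$ for a subset of some $\alpha<\kappa$, the plan is to first pass to a direct extension of $p$ whose last lower entry reaches past $\alpha$, so that the direct extension order below it is $\alpha^+$-closed, and then to build a $\leq^*$-descending chain indexed by $\gamma<\alpha$, at each step using the Prikry property to decide $\check\gamma\in\dot X$, finally taking a $\leq^*$-lower bound. This yields a direct extension deciding $\dot X$ entirely, and hence $\dot X^G\in V$. Thus $\power(\alpha)^{V[G]}=\power(\alpha)^V$ for every $\alpha<\kappa$; together with $V\models\GCH$ and the inaccessibility of $\kappa$ in $V$, this delivers both that $\kappa$ is a strong limit in $V[G]$ and that every $V$-cardinal below $\kappa$ remains a cardinal in $V[G]$.

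For regularity, the same template applies to a putative cofinal $f\colon\beta\to\kappa$ with $\beta<\kappa$ in $V[G]$: any name for $f$ would, value by value, be decided by a single direct extension of any given condition, forcing $f\in V$; but $\kappa$ is regular in $V$, so no such $f$ exists, whence $\cf^{V[G]}(\kappa)\geq\kappa$. Combined with the strong limit property and with the preservation of $V$-cardinals below $\kappa$ (which makes $\kappa$ a limit cardinal in $V[G]$), this gives inaccessibility.

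The only nontrivial piece of all this is the Prikry property (and the associated closure of $\leq^*$) for $\mathbb{R}(\cW)$, which is precisely the technical heart of Krueger's framework and which the proof will invoke as a black box. The ``fusion via $\leq^*$-closure'' step sketched above is then routine and should be the only thing the proof actually needs to spell out.
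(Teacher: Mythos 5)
There is a genuine gap here, and in fact your argument would prove too much. First, the closure of $\leq^*$ below a condition $p$ is governed by the \emph{smallest} $a_i\cap\kappa$ occurring in $\stem p$, not by the last one: a $\leq^*$-lower bound must shrink the measure-one set attached to \emph{every} entry of the stem, and the set attached to $a_i$ lives in the filter $\bigcap_{\zeta<o^{\cW}(\rho_i)}W(\rho_i,\zeta)$, which is only $\rho_i$-complete. (Also, a direct extension cannot make ``the last lower entry reach past $\alpha$'' --- direct extensions do not change the stem.) Consequently your fusion only produces a single deciding condition below conditions with short or empty stems; it does not produce a \emph{dense} set of deciding conditions, since conditions whose stems contain entries with small $\rho_i$ cannot be handled. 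Indeed the conclusion $\power(\alpha)^{V[G]}=\power(\alpha)^V$ is false for this forcing: the lower factors $\mathbb{R}(\cW\restriction\rho+1)$ are nontrivial forcings of size $<\kappa$ that add new bounded subsets of $\kappa$ --- the paper later relies on exactly this when it realizes a symmetric set of ordinals as generic for $\mathbb{Q}=\mathbb{R}_0/(\overline C\restriction\rho)$ over $V[\overline C]$. The correct route to ``strong limit'' is factorization: every bounded subset of $\kappa$ is added by a small lower part, and there are fewer than $\kappa$ many relevant lower parts, each of size $<\kappa$.

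Second, and more seriously, the regularity half cannot be ``routine fusion''. Deciding the \emph{value} of $\dot f(\gamma)$ is not a single statement, so the plain Prikry property does not hand you one direct extension deciding it; you would need $\kappa$ many $\leq^*$-steps, one per candidate value, which exceeds the available closure. The tell-tale sign is that your proposal never uses $o^{\cW}(\kappa)=\kappa^{++}$, whereas regularity of $\kappa$ genuinely depends on the length of the coherent sequence: for $o^{\cW}(\kappa)=1$ the forcing is supercompact Prikry forcing and $\kappa$ becomes singular of cofinality $\omega$, yet your argument would apply verbatim. The paper instead proves a \emph{strong} Prikry property (a finite-height tree $T$ of measure-one splittings such that entering a given dense open set requires only the one-point extensions along $T$), notes that under $\GCH$ the trees associated to all $\lambda$ values of $\dot f$ together mention only $\kappa^+$ many measures (since $|\power_\kappa\kappa^+|=\kappa^+$), bounds these by a single $\zeta^*<\kappa^{++}=o^{\cW}(\kappa)$, and integrates over the ultrapower by $W(\kappa,\zeta^*)$: placing $j\image\kappa^+$ on top of each $j(q_\eta)$ shows that $j(q)^\smallfrown\langle j\image\kappa^+\rangle$ forces $j(\dot f)$ to have range bounded by $\kappa$, and reflecting yields a measure-one set of one-point extensions forcing the range of $\dot f$ to be bounded. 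That counting-and-reflection step is the actual content of the lemma and is exactly what your black-box appeal to the Prikry property omits.
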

The proof is standard, and the result itself appeared implicitly in the literature. We include it for the convenience of the reader.
\begin{proof}

First, let us derive a \emph{strong Prikry Property} from the standard Prikry Proprety. We will prove it only for conditions of length $0$ where the proof of the general case can be obtained using the factorization property (see \cite[Section~4]{Krueger}).

\begin{claim}
Let $p = \langle \varnothing, A\rangle$ be a condition of length $0$ and let $D$ be a dense open set in the supercompact Radin forcing. Then, there are:
\begin{enumerate}
\item a direct extension $q \leq^* p$,
\item a natural number $n < \omega$ and
\item a rooted tree of height $n$, $T \subseteq (\power_\kappa \kappa^+)^{\leq n}$, such that for each $\eta \in T$ with $|\eta|<n$, there is $\zeta_\eta < o^{\cW}(\kappa)$ such that $\{x \in \power_\kappa \kappa^{+} \mid \eta ^\smallfrown \langle x\rangle \in T\} \in W(\kappa,\zeta_\eta)$.\footnote{In particular, every maximal node in the tree is of height $n$.}
\end{enumerate}
such that for every $\eta$ in the top level of $T$, there is a direct extension of $q^\smallfrown \eta$ in $D$.
\end{claim}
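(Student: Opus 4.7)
I would bootstrap from the ordinary Prikry Property for $\mathbb R(\cW)$ established in \cite[Section~4]{Krueger}, which asserts that every statement in the forcing language is decided by some direct extension of any given condition. The tree $T$ will then be constructed layer by layer using the normality of the coherent measures in $\cW(\kappa)$.

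The first step is \emph{one-level decidability}. Given $q \leq^* p$ and some $\zeta < o^\cW(\kappa)$, for each legitimate one-point extension $\eta$ with $o^\cW(\eta \cap \kappa) = \zeta$, the Prikry Property produces a direct extension of $q^\smallfrown\eta$ deciding the statement ``some direct extension of mine lies in $D$''. By a diagonal intersection using the normality of $W(\kappa,\zeta)$, these fold into a single direct extension $q' \leq^* q$ and a $W(\kappa,\zeta)$-measure-one set $B^\zeta \subseteq \power_\kappa\kappa^+$ such that for every $\eta \in B^\zeta$ the condition $(q')^\smallfrown\eta$ has already decided the statement.

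Next, I would grow the tree $T$ recursively. At each current internal node $\eta$ with associated condition $q_\eta$: if $q_\eta$ has a direct extension in $D$, mark $\eta$ as a leaf; otherwise, pick some $\zeta_\eta < o^\cW(\kappa)$, apply one-level decidability with $\zeta = \zeta_\eta$ to $q_\eta$, and take the children of $\eta$ to be the measure-one set $B^{\zeta_\eta}$ of negatively-decided extensions, so the recursion continues on each extension $q_\eta{}^\smallfrown \eta'$. Using normality to diagonalize across all branches simultaneously, the entire iteration is carried out over a single direct extension $q \leq^* p$.

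The main obstacle is bounding the depth by a \emph{fixed} finite $n$. I would handle this by applying the Prikry Property to the formulae $\varphi_n := $ ``there exists $r \in \check D \cap \dot G$ with $\len(r) \leq n$''. By density of $D$, in every generic below $p$ some $r \in D \cap \dot G$ exists and has finite length, so $p \forces \exists n\, \varphi_n$. Using the $\kappa$-completeness of $\leq^*$ on conditions of length zero (so that countably many decisions can be aggregated into a single direct extension), we obtain $q \leq^* p$ deciding every $\varphi_n$; since $q$ forces $\exists n\, \varphi_n$, there must be some fixed $n_0$ with $q \forces \varphi_{n_0}$. This $n_0$ bounds the depth to which the recursion can continue without hitting $D$, so $T$ terminates at depth at most $n_0$, yielding items (1)–(3) and the required ``top-level $\eta$ extend to $D$'' conclusion.
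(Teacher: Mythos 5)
Your first and last steps are close to the paper's: the paper also begins by using the Prikry Property together with the closure of the direct extension order to fix, by a single $q\leq^* p$, the minimal length $n$ of a condition of $D\cap\dot G$, and it also builds the tree level by level using a measure $W(\kappa,\zeta_\eta)$ at each node. But your tree is built with the wrong polarity, and the termination argument does not close the gap. You recurse on the \emph{negatively} decided extensions and then claim that, because $q\forces$ ``there is $r\in \check D\cap\dot G$ with $\len r\leq n_0$'', the recursion must stop by depth $n_0$. This does not follow: if $\eta=\langle x_1,\dots,x_{n_0}\rangle$ is a branch all of whose nodes are negative and $G\ni q_\eta$ is generic, the witnessing condition $r\in D\cap G$ of length $\leq n_0$ has a stem consisting of \emph{some} $\leq n_0$ points of the generic club, which need not be an initial segment of $\eta$ (its stem may use points of $C_G$ interleaved with, or lying above, the $x_i$). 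So the existence of $r$ contradicts nothing about the nodes $q_{\eta\restriction k}$, and your recursion has no reason to terminate. (A secondary, fixable issue: your leaves appear at varying depths, whereas the claim requires every internal node to have a full measure-one set of children, forcing all maximal nodes to height exactly $n$; also note that ``some direct extension of mine lies in $D$'' is a ground-model statement about a condition, not a forcing statement to which the Prikry Property applies --- you need to decide a statement about $\dot G$.)

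The paper's proof repairs exactly this point by building the tree \emph{positively} and tying the stems to the witness. Having fixed $n$, one shows that the set of $x\in A^q$ for which some direct extension of $q^\smallfrown\langle x\rangle$ forces ``there is $r\in D\cap\dot G$ of length $n$ with $\min \stem(r)=x$, and $x$ is the least point of the club with this property'' must be positive for the filter $\bigcap_{\zeta<o^{\cW}(\kappa)}W(\kappa,\zeta)$ --- otherwise one shrinks $A^q$ to avoid it and contradicts the choice of $n$ --- hence lies in some $W(\kappa,\zeta_{\varnothing})$; one then iterates, at each step forcing the stem built so far to be an initial segment of $\stem(r)$ for the length-$n$ witness $r$. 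After $n$ steps the node $\eta$ forces some $r\in D\cap\dot G$ to have stem exactly $\eta$, and since $r$ and $q_\eta$ then share a stem they have a common \emph{direct} extension, which lies in $D$ by openness. If you want to salvage your outline, replace the negative recursion by this minimality-and-positivity argument; the rest of your plan (fixing $n$ first, diagonalizing over the levels) then goes through.
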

\begin{proof}
By applying the Prikry Property and the $\sigma$-closure of the measures, we can conclude that there is $\langle\varnothing,A^q\rangle=q\leq^*p$ that decides the minimal length of a condition in the generic extension which is in $D$. Let $n$ be this length.

For every $x \in A^q$, let us check whether there is a direct extension of $q^\smallfrown \langle x\rangle$ that forces that there is a condition $r$ in the generic of length $n$, in $D$, such that $\min d^r = x$, and $x$ is minimal in the generic club with such property. Note that this set must be positive (with respect to the filter $\bigcap_{\zeta < o^{\cW}(\kappa)} W(\kappa,\zeta)$), as otherwise we could shrink $A^q$ to avoid it and get a contradiction. So, there is some $\zeta < o^{\cW}(\kappa)$ such that for a large set $T_0$ of $x$ with respect to $U_\zeta$, there is $q_x = \langle (x, A_0^x), A_1^x\rangle \leq^* q ^\smallfrown \langle x\rangle$ that forces the existence of a condition $r$ in $D \cap \dot{G}$ with length $n$ and  $\min d^r = x$.

For each such $x \in T_0$ we repeat the process, and find an ordinal $\zeta_x < o^{\cW}(\kappa)$ and measure one many $y$ (relative to $W(\kappa,\zeta_x)$) with the property that there is a direct extension of $q_x^\smallfrown \langle y\rangle$ forcing $\langle x,y\rangle$ to be the first two elements in $d^r$ for $r \in D \cap \dot{G}$ of length $n$. Continuing this way for $n$ steps we get the existence of $T$, which proves the claim.
\end{proof}

Let us consider the name $\dot{f}$ for a function from $\lambda$ to $\kappa$ for $\lambda < \kappa$. For each $\alpha < \lambda$, let $D_\alpha$ be the dense open set of all conditions deciding a value for $\dot{f}(\check\alpha)$. Applying our version of the strong Prikry Property for each $\alpha<\lambda$ we obtain a direct extension $q \leq^* p$ and a sequence of trees $\langle T_\alpha \mid \alpha < \lambda\rangle$ of various finite heights. We can attach to each one of the nodes of the trees $\eta \in T_\alpha$, the corresponding direct extension $q_\eta \leq q ^\smallfrown \eta$ from $D_\alpha$.

Let us consider all $\zeta_\eta$ for $\eta \in T$. As there are $\kappa^+$ such ordinals and $o^{\cW}(\kappa) = \kappa^{++}$, there is an ordinal $\zeta^* < \kappa^{++}$ bounding all of them.

In the ultrapower by $W(\kappa,\zeta^*)$ the we have $j\image T_\alpha$ for all $\alpha < \lambda$ as well as the corresponding $j(\eta) \mapsto j(q_\eta)$.

Consider $[\id]_{W(\kappa,\zeta^*)} = j\image \kappa^+$. It is easy to verify that one can add this element to each one of the $j(q_\eta)$ for each $\eta \in T_\alpha$.

Moreover, as all measures mentioned by the trees are below $\zeta^*$, for each $\alpha < \lambda$, $\{j(\eta) \in T_\alpha \mid \eta\text{ is a node}\}$ forms a maximal antichains in the Radin forcing below $j\image \kappa^+$.
Thus for each element in $T_\alpha$, $j(\eta)$, the condition $j(q)^\smallfrown \langle j(\eta), j \image \kappa^+\rangle = j(q)^\smallfrown \langle j\image \kappa^+, j(\eta)\rangle$ is forcing a value to $j(\dot f)(\check \alpha)$, as it extends the condition $j(q_\eta)$. By elementarity this value must be the $j$-image of the one that $q_\eta$ forced for $\dot{f}(\check\alpha)$ and thus below $\kappa$.

We conclude that $j(q)^\smallfrown \langle j\image \kappa^+\rangle$ forces $j(\dot{f})$ to have a range boudned by $\kappa$. Reflecting this, we obtain a $W(\kappa,\zeta^*)$-large set such that adding each $x$ in this set to the stem forces the range of $f$ to be bounded by $x \cap \kappa$.
\end{proof}
Let $G$ be generic for $\mathbb{R}(\cW)$. Let us denote the generic continuous and increasing sequence in $\power_\kappa\kappa^{+}$ by $C_G$, so \[C_G = \{x \mid \exists p \in G, x\in \stem p\}.\]

Let $\overline{C} = \{x \cap \kappa \mid x \in C_G\}$. Since $C_G$ is continuous and cofinal, $\overline{C}\subseteq \kappa$ is a club. To prove that $\overline{C}$ is a Radin club for $\mathbb{R}(\overline{\cW})$ we will need to use the Mathias Criterion for genericity.

Recall that a condition $p\in\mathbb{R}(\overline{\cW})$ is compatible with $\overline C\subseteq\kappa$ if $\stem p\subseteq\overline{C}$ and whenever $d_i<d_{i+1}$ are two successive points in $\stem p$, then $\overline{C}\cap(\alpha_i,\alpha_{i+1})\subseteq A_{i+1}$ if $d_{i+1}=\langle\alpha_{i+1},A_{i+1}\rangle$ or else $\overline{C}\cap(\alpha_i,\alpha_{i+1})=\varnothing$.

\begin{fact}[Mathias Criterion]
Let $\overline{C} \subseteq \kappa$ be a club. Let $\overline{G}$ be the collection of all conditions in $\mathbb{R}(\overline{\cW})$ compatible with $\overline{C}$. Then $\overline{G}$ is a generic filter iff
\begin{enumerate}
\item For every $\alpha \in \acc \overline{C}$, $\overline{C} \cap \alpha$ is generic for $\mathbb{R}(\overline{\cW} \restriction \alpha + 1)$.
\item For every $A \in\bigcap_{\zeta < o^{\cW}(\kappa)} \cW(\kappa, \zeta)$, there is $\eta < \kappa$ such that $\overline{C} \setminus \eta \subseteq A$.
\end{enumerate}
\end{fact}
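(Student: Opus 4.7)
The plan is to prove both directions of the equivalence separately; I expect the ``if'' direction to be the technical heart of the argument. For the ``only if'' direction, assume $\overline G$ is a $V$-generic filter. Condition~(2) follows from a direct density argument: for each $A$ in the intersection of top measures, the set of conditions whose top measure-one component is contained in $A$ is dense by direct extension, and any such condition in $\overline G$ forces a tail of $\overline C$ into $A$, producing the desired $\eta$. Condition~(1) comes from the factorization property of Radin forcing at each $\alpha\in\acc\overline C$ (cf.\ \cite[Section~4]{Krueger}): $\mathbb R(\overline\cW)$ decomposes around $\alpha$ into a two-step iteration whose first factor yields a $V$-generic filter for $\mathbb R(\overline\cW\restriction\alpha+1)$, and the corresponding Mathias sequence is precisely $\overline C\cap\alpha$.

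For the ``if'' direction, fix a dense open $D\subseteq\mathbb R(\overline\cW)$ and a condition $p\in\overline G$; I need to produce an element of $D\cap\overline G$ extending $p$. Using the factorization at a suitable point of $\stem p$ in $\overline C$ (and invoking~(1) at the accumulation points of $\stem p$ to know that the part of $\overline C$ below is compatible), I would reduce to the case $\stem p=\varnothing$, so $p=\langle\varnothing,A\rangle$. The strong Prikry Property established earlier, transferred from $\cW$ to $\overline\cW$, then yields a direct extension $q\leq^* p$, an integer $n$, and a tree $T$ of height $n$ whose branching at each non-top node $\eta$ is measure-one with respect to some $\overline\cW(\kappa,\zeta_\eta)$, such that each top-level $\eta\in T$ admits a direct extension $q_\eta$ of $q^\smallfrown\eta$ lying in $D$.

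What remains is to realize a branch $\eta_1<\dots<\eta_n$ through $T$ inside $\overline C$ so that $q^\smallfrown\langle\eta_1,\dots,\eta_n\rangle$ is compatible with $\overline C$. I would build this branch by induction on level: given $\eta_1<\dots<\eta_i\in\overline C$, the set of permitted successors in $T$ is measure-one for some $\overline\cW(\kappa,\zeta)$, and condition~(2) (applied to the intersection of this set with the current top measure-one component) produces a tail of $\overline C$ lying in it, from which $\eta_{i+1}$ can be chosen. The main obstacle will be verifying that the completed condition is actually compatible with $\overline C$, i.e.\ that every $\alpha\in\overline C\cap(\eta_i,\eta_{i+1})$ lies in the measure-one set attached at position $i+1$: at successor points of $\overline C$ the interval is empty and this is vacuous, while at accumulation points of $\overline C$ this is precisely the content of~(1) applied at $\eta_{i+1}$, once one unwinds the definition of the one-point extension via the Mostowski collapse $\pi_{\eta_{i+1}}$. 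Once such a branch is produced, $q_{\eta_n}$ lies in $D\cap\overline G$, which completes the genericity of $\overline G$.
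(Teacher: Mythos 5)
The paper states this result as a \emph{Fact} and supplies no proof (it is the standard Mathias-style genericity criterion for Radin forcing, essentially due to Mitchell), so there is no argument of the authors to compare yours against; I can only assess the proposal on its own terms. Your ``only if'' direction is fine: (2) is a routine density argument and (1) follows from the factorization property. The architecture of the ``if'' direction --- invoke the strong Prikry property for a dense open $D$ and then realize a branch of the resulting tree $T$ inside $\overline C$ --- is also the right one.

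The gap is in how you realize the branch. The branching set at a node $\eta$ of $T$ lies in a \emph{single} measure $\overline{W}(\kappa,\zeta_\eta)$, not in $\bigcap_{\zeta<o^{\cW}(\kappa)}\overline{W}(\kappa,\zeta)$, so condition (2) simply does not apply to it, nor to its intersection with the top measure-one component of $q$. Worse, the conclusion you draw from it is false in general: by the discreteness claim proved later in the paper, the measures on the sequence can be separated by a partition $\langle B_i\rangle$ with $B_i\in \overline{W}(\kappa,j)$ iff $i=j$, and $\overline C$ cannot have a tail inside a set that is null for all but one measure --- on the contrary, $\overline C$ must meet each $B_i$ cofinally. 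Finding a point of $\overline C$ inside a set that is large for only one prescribed measure is exactly the hard part of the criterion; it requires analysing the orders $o^{\overline\cW}(\alpha)$ of the points $\alpha\in\overline C$ (using (1) recursively at accumulation points together with (2), typically by induction on $o^{\overline\cW}(\kappa)$) to show that points of each order $\zeta$ landing in any prescribed $\overline{W}(\kappa,\zeta)$-large set occur cofinally in $\overline C$. A second, smaller issue: you ultimately need \emph{all} of $\overline C\cap(\eta_i,\eta_{i+1})$ to lie in the measure-one set attached at $\eta_{i+1}$, but condition (1) at $\eta_{i+1}$ only yields that a \emph{tail} of $\overline C\cap\eta_{i+1}$ lies in it, and since $\eta_i$ was fixed before $\eta_{i+1}$ you have no control over where that tail begins. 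As written, the inductive construction of the branch does not close.
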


\begin{lemma}
$\overline{C}$ is a generic Radin club for $\mathbb{R}(\overline{\cW})$.
\end{lemma}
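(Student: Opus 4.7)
The plan is to verify both clauses of the Mathias Criterion stated above. The key unifying observation is that the map $x\mapsto x\cap\kappa$ from $\power_\kappa\kappa^+$ to $\kappa$ is precisely the map that induces each $\overline{\cW}(\kappa,\zeta)$ from $\cW(\kappa,\zeta)$; equivalently, a set $A\subseteq\kappa$ lies in $\overline{\cW}(\kappa,\zeta)$ if and only if $B_A:=\{x\in\power_\kappa\kappa^+\mid x\cap\kappa\in A\}$ lies in $\cW(\kappa,\zeta)$, and by \cref{lemma:projection-coherence} the analogous equivalence holds at every level $\alpha\leq\kappa$.

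For the tail-containment clause, suppose $A\in\bigcap_{\zeta<o^{\cW}(\kappa)}\overline{\cW}(\kappa,\zeta)$. Then $B_A$ lies in $\bigcap_{\zeta}\cW(\kappa,\zeta)$, so the set of conditions in $\mathbb{R}(\cW)$ whose top large-set is contained in $B_A$ is dense (one simply shrinks the top measure-one component). Picking such a condition $p\in G$, every $x\in C_G$ occurring past $\stem p$ lies in $B_A$, hence $\overline{C}\setminus\eta\subseteq A$ where $\eta$ is any suitable ordinal bounding the stem, which is the desired conclusion.

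For the inductive clause, I would proceed by induction on $\alpha\in\acc\overline{C}$. To each such $\alpha$ there corresponds a unique $x\in\acc C_G$ with $x\cap\kappa=\alpha$. By the factorization property of $\mathbb{R}(\cW)$ (see \cite[Section~4]{Krueger}), the image under $\pi_x$ of the generic below $x$ is itself generic for the supercompact Radin forcing $\mathbb{R}(\cW\restriction\alpha+1)$ over $V$. This puts us at level $\alpha$ in the same situation as at $\kappa$, one block down. Running the projection argument of the previous paragraph with $\alpha$ in place of $\kappa$, and invoking the inductive hypothesis for smaller accumulation points to handle recursive genericity below, verifies both Mathias clauses for $\overline{C}\cap\alpha$ against $\mathbb{R}(\overline{\cW}\restriction\alpha+1)$, which is exactly what is required.

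The main technical subtlety lies in the bookkeeping: one must check that the $x\mapsto x\cap\kappa$ projection commutes correctly with the block structure of Radin conditions at every level, and that pushforwards of large sets translate into valid top large-sets in the projected forcing. \cref{lemma:projection-coherence} does the conceptual heavy lifting here, guaranteeing that $\overline{\cW}$ is coherent (so that $\mathbb{R}(\overline{\cW})$ is even defined) and that the measure-one sets correspond correctly under the projection at all levels simultaneously; what remains is a careful but routine translation between the supercompact and standard Radin condition formats.
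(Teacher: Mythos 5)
Your proposal is correct and follows essentially the same route as the paper: both verify the Mathias Criterion by induction on $\alpha\in\acc\overline{C}$, using the fact that a set $A$ lies in the intersection of the projected measures exactly when its pullback $\{x\mid x\cap\alpha\in A\}$ lies in the intersection of the supercompact measures, and then a density/direct-extension argument to shrink the relevant large set. The paper's proof is in fact terser than yours, leaving the factorization bookkeeping you flag at the end equally implicit.
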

\begin{proof}
This follows from the Mathias Criterion for genericity of the Radin club.
Indeed, let us prove by induction on $\alpha \in \acc \overline{C}$ that the criteria holds. Let $A \in \bigcap \overline{\cW}$, when by the definition of $\overline{W}$, the set $\tilde{A} = \{x \in \power_\alpha \alpha^{+} \mid x \cap \alpha \in A\} \in \bigcap \cW$. For every condition $p \in \mathbb{R}(\cW)$ with $\alpha\in \stem p$ there is a direct extension $q$ such that the large set associated with $\alpha$ is contained in $\tilde{A}$. In particular, $q$ forces that a tail of elements in $\overline{C} \cap \alpha$, is contained in $A$.
\end{proof}
\begin{lemma}
Every $\alpha \in \mathrm{acc}\, C_G$ is singular in $V[G]$.
\end{lemma}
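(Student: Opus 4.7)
The plan is to exploit the continuity of the generic supercompact Radin sequence. Let $a \in \acc C_G$ and let $\eta$ be the order-type of $\{b \in C_G : b \subsetneq a\}$, necessarily a limit ordinal since $a$ is an accumulation point in the $\subseteq$-order. By continuity, $a = \bigcup_{\xi < \eta} a_\xi$, writing $a_\xi$ for the $\xi$-th member of $C_G$. Set $\alpha := a \cap \kappa$ and $\rho_\xi := a_\xi \cap \kappa$; then the sequence $\{\rho_\xi : \xi < \eta\}$ is cofinal in $\alpha$ with order type $\eta$. Since $\overline C$ is a club in $\kappa$ by the previous lemma, its enumeration function satisfies $\rho_\xi \geq \xi$, so $\alpha \geq \eta$.

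In the typical case $\eta < \alpha$, we are done immediately: the projected cofinal sequence of length $\eta$ in $\alpha$ witnesses $\cf^{V[G]}(\alpha) \leq \eta < \alpha$. The delicate case is $\eta = \alpha$, in which $\alpha$ is a fixed point of the enumeration of $\overline C$; such fixed points form a club in $\kappa$ and so cannot be avoided in general. Here I would invoke the supercompact structure of the forcing: the implicit convention $|a| = (a \cap \kappa)^+$ gives $|a|^V = \alpha^+$, while in $V[G]$ the decomposition $a = \bigcup_{\xi < \alpha} a_\xi$ presents $a$ as a union of $\alpha$-many sets each of $V$-cardinality $\rho_\xi^+ < \alpha$ (using that $\alpha$ is a limit cardinal in $V$, as an accumulation of Radin club points). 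Hence $|a|^{V[G]} \leq \alpha$, so $(\alpha^+)^V$ collapses to $\alpha$ in $V[G]$. Combining this cardinality collapse with the Mostowski collapse $\pi_a : a \to \otp(a) = \alpha^+$, a $V[G]$-bijection $h : \alpha \to a$ together with a pigeonhole-style analysis on the $a_\xi$-decomposition yields a cofinal sequence in $\alpha$ of length strictly less than $\alpha$, giving singularity.

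The main obstacle is precisely the fixed-point case $\eta = \alpha$: the projection cofinality alone yields only $\cf^{V[G]}(\alpha) \leq \alpha$, not the desired strict inequality. The supercompact enhancement of the Radin forcing---namely that each generic $a_\xi$ has $V$-cardinality $(a_\xi \cap \kappa)^+$ rather than merely $a_\xi \cap \kappa$ as in standard Radin---is what produces the additional cardinal collapse and, through it, the strict singularity. This is the step where the $\kappa^{++}$-supercompactness of $\kappa$ is genuinely used, in contrast with the analogous statement for standard Radin forcing where accumulation points of sufficiently large Mitchell order remain regular.
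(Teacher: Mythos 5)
Your easy case is fine: if the order type $\eta$ of $C_G$ below $a$ is less than $\alpha=a\cap\kappa$, the projected sequence $\langle a_\xi\cap\kappa\mid\xi<\eta\rangle$ witnesses $\cf^{V[G]}(\alpha)\leq\eta<\alpha$. But the fixed-point case $\eta=\alpha$, which as you note is unavoidable, is the entire content of the lemma, and the mechanism you propose for it does not work. Collapsing $(\alpha^+)^V$ to $\alpha$ is simply compatible with $\alpha$ remaining regular (think of $\Col(\alpha,\alpha^+)$), and no pigeonhole argument on the decomposition $a=\bigcup_{\xi<\alpha}a_\xi$ can extract a short cofinal sequence from a cardinality collapse alone. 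The forcing at hand refutes your proposed implication directly at the top: $(\kappa^+)^V=\bigcup C_G$ is a union of $\kappa$ many sets each of $V$-cardinality $<\kappa$, so $(\kappa^+)^V$ is collapsed to $\kappa$ in $V[G]$, and yet $\kappa$ remains inaccessible there by the preceding lemma. The only feature distinguishing $\kappa$ from an accumulation point $\alpha<\kappa$ is the length of the coherent sequence, $o^{\cW}(\kappa)=\kappa^{++}$ versus $o^{\cW}(\alpha)<\alpha^{++}$; since your argument never invokes the bound $o^{\cW}(\alpha)<\alpha^{++}$, it would prove too much and therefore cannot be completed as stated.

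The paper's proof is built precisely around that bound. After factoring so that $\alpha$ is the top cardinal of $\mathbb{R}(\cW\restriction\alpha+1)$ with $\zeta=o^{\cW}(\alpha)<\alpha^{++}$, one first shows the measures $\langle W(\alpha,\xi)\mid\xi<\zeta\rangle$ are \emph{discrete}: there is a partition $\langle B_i\mid i<\zeta\rangle$ of $\power_\alpha\alpha^+$ with $B_i\in W(\alpha,j)$ iff $i=j$ (this uses normality and that there are at most $\alpha^+$ many measures, hence the $\alpha^{++}$ threshold). Each sufficiently large Radin club point below $\alpha$ then carries a well-defined index, and coherence guarantees that the indices seen below a club point $x$ are (generically) eventually below the index of $x$. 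If $\cf\zeta<\alpha$, choosing the least club points realizing indices along a cofinal sequence in $\zeta$ produces a cofinal sequence in $\alpha$ of length $\cf\zeta<\alpha$; if $\cf\zeta\in\{\alpha,\alpha^+\}$, a recursive $\omega$-sequence of club points whose indices are forced ever higher must union up to $\alpha$, giving $\cf^{V[G]}(\alpha)=\omega$. You would need to incorporate an argument of this kind, keyed to the length of the measure sequence at $\alpha$, to close the gap.
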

\begin{proof}
First, by the factorization argument, this statement is equivalent to the statement that forcing with supercomapct Radin forcing for the coherent sequence $\cW \restriction \alpha + 1$ with top cardinal $\alpha$ and $o^\cW(\alpha) = \zeta < \alpha^{++}$ singularizes $\alpha$.

\begin{claim}
Fix $\alpha \leq \kappa$. Assume that $\zeta < \alpha^{++}$. The measures $\{ W(\alpha, \xi) \mid \xi < \zeta\}$ are discrete in the sense that there is a partition of $\power_\alpha\alpha^+$, $\langle B_i \mid i < \zeta\rangle$ such that $B_i \in W(\alpha, j)$ iff $i=j$.
\end{claim}
\begin{proof}
Let $\langle \cU_i \mid i < i_*\rangle$ be an enumeration $\langle W(\alpha, \xi) \mid \xi < \zeta\rangle$ with $i^*\leq\alpha^+$. Without loss of generality, $i^*$ is a cardinal.

For each $i < j < i_*$ let $B_{i,j} \in \cU_i \setminus \cU_j$ and $B_{j,i} = \power_\alpha\alpha^+ \setminus B_{i,j}$. Let $B_{i,i} = \power_{\alpha}\alpha^+$.

Let $B_i = \{x \in \power_\alpha\alpha^+ \mid i \in x\} \cap \triangle_{j < i_*} B_{i,j}$ where
\[\triangle_{j < i_*} B_{i,j} = \{y \in \power_\alpha\alpha^{+} \mid \forall j \in y \cap i_*,\, y \in B_{i,j}\}. \]

So, $B_i \in \cU_i$ by the normality of $\cU_i$.

Moreover, for $i < j$, $B_i \cap B_j = \varnothing$. Indeed, if $x \in B_i \cap B_j$ then $i, j \in x$ and thus $x \in B_{i,j}$ and $x \in B_{j,i}$, a contradiction.
\end{proof}
Let $h \colon i_* \to \zeta$ be the bijection used in the proof above. As the set $B_* = \bigcup_{i < \eta} B_i$ belongs to $\bigcap_{\xi < \zeta} W(\alpha,\xi)$, for any large enough $y \in C_G$, with $y \cap \kappa < \alpha$, $y \in B_*$. So, for such $y$, we can find the unique $\xi < \zeta$ such that $y \in B_i$ for $\xi = h(i)$. Without loss of generality, all the elements of the Radin club below $\alpha$ belong to $B_*$. Moreover, we may assume (by shrinking $B_i$ if necessary), that for every $x \in B_i$, $\pi_x(\bigcup_{h(\xi) < h(i)} B_\xi)$ belongs to the intersection of measures of $x \cap \kappa$, that is, $\bigcap_{\alpha<o^{\cW}(x\cap\kappa)}W(x\cap\kappa,\alpha)$.\footnote{Here we use the coherence of the sequence.}

Let us now split into cases.

\textbf{Case 0:} If $\cf \zeta < \alpha$, let $\langle \delta_i \mid i < \cf \zeta\rangle$ be a cofinal sequence at $\zeta$. Let $y_i$ be the least element in the Radin club below $\alpha$ such that $y_i\in B_{h^{-1}(\delta_i)}$ (generically, there must be such an element). If $\sup_{i < \cf \zeta} y_i\cap \kappa < \alpha$, then by the closure of $C_G$, $y_* = \bigcup y_i \in C_G$ and its intersection with $\kappa$ lies below $\alpha$. So, $y_* \in B_\rho$ for some $\rho$, but this is impossible as for all but boundedly many $i < \cf \rho$, $\delta_i \geq h(\rho)$. This is a contradiction, as by genericity, for any large enough element in $C_G$ below $y_*$  belongs to $\pi_{y_*}(\bigcup_{h(\xi) < h(\rho)} B_\xi)$ and in particular do not belong to $B_{h^{-1}(\delta_i)}$ for $\delta_i \geq h(\xi)$.

\textbf{Case 1:} if $\cf \zeta \in \{\alpha, \alpha^{+}\}$. Let $z_*$ be the element in $C_G$ with $z_* \cap \kappa = \alpha$. Let $\langle \delta_i \mid i < \cf \zeta\rangle$ be a cofinal sequence at $\zeta$. Let us shrink $B_i$ so that for all $x \in B_{h^{-1}(\delta_i)}$, $\sup (\pi_{z_*}(x) \cap \cf \zeta) > i$.

Pick $y_0 \in C_G$ arbitrary and let us recursively define $y_{n+1} \in C_G$ to be an element of $B_{h^{-1}(\delta_\xi)}$ for $\xi = \pi_{z*}(y_n \cap \cf \zeta)$.

Let us show that $\bigcup_{n<\omega} y_n = z_*$. Indeed, let $\bigcup_{n<\omega} y_n = y_*$ and let us assume that $y_* \cap \kappa < \alpha$. Then, there is $\xi$ such that $y_* \in B_\xi$.

Let $\alpha_* = y_* \cap \kappa$. Then, as before, $h(\xi) = \delta_{\alpha_*}$ (as otherwise, if $\alpha_* > h(\xi)$ there is $n < \omega$ such that $y_n \cap \kappa$ is strictly larger than $h(\xi)$ and if $\alpha_* < h(\xi)$, than for all large $n$, the $\xi'$ such that $y_n \in B_{h(\xi')}$ is bounded). But this is impossible as for all $x \in B_{\xi}$, $\sup(\pi_{z*}(x) \cap \cf \zeta) > \delta_{\alpha_*}$.
\end{proof}
The following lemma is due to Radin, \cite[Claim~8]{Radin}
\begin{lemma}
In the generic extension by $\overline{\cW}$, $\kappa$ remains measurable.
\end{lemma}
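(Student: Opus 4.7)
The plan is to extend a measure from the ground model to $V[\overline{G}]$ via the standard Radin lifting argument. Fix some $\xi < o^{\overline{\cW}}(\kappa) = \kappa^{++}$, set $U = W(\kappa,\xi)\restriction\kappa$, and let $j\colon V \to M = \Ult(V,U)$ be the ultrapower. Coherence from \cref{lemma:projection-coherence} gives $j(\overline{\cW})(\kappa) = \overline{\cW}(\kappa)\restriction\xi$, so in $M$ the ordinal $\kappa$ is a legitimate stem point for $j(\mathbb{R}(\overline{\cW}))$ of attached order $\xi$. Under $\GCH$ we have that $M$ is $\kappa$-closed in $V$ and $|j(\kappa)|^V = \kappa^+$.

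In $M$, the forcing $j(\mathbb{R}(\overline{\cW}))$ factors naturally as $\mathbb{R}(\overline{\cW}\restriction\kappa+1) \times \mathbb{R}^{\mathrm{up}}$, where $\mathbb{R}^{\mathrm{up}}$ is the Radin-style forcing for $j(\overline{\cW})$ restricted to the interval $(\kappa, j(\kappa)]$. I would build an $M$-generic filter $G^*$ for $j(\mathbb{R}(\overline{\cW}))$ whose projection to the lower factor is $\overline{G}$ and whose first stem point past $\overline{G}$ is $\kappa$ itself, with associated order $\xi$. The lower factor is handled by $\overline{G}$; for the upper part $\mathbb{R}^{\mathrm{up}}$, note that by $\GCH$ and the $\kappa$-closure of $M$, there are at most $\kappa^+$ many open dense subsets of $\mathbb{R}^{\mathrm{up}}$ in $M$, while $\mathbb{R}^{\mathrm{up}}$ has very strong closure in $M$ (in particular on its direct-extension ordering). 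A $V$-side diagonalization against these dense sets, using the closure of $\mathbb{R}^{\mathrm{up}}$, produces the desired $M$-generic $G^*$, and the embedding lifts to $j^*\colon V[\overline{G}] \to M[G^*]$.

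Set \[U^* = \{A \in \power(\kappa) \cap V[\overline{G}] \mid \kappa \in j^*(A)\}.\] Standard elementarity arguments then show that $U^*$ is a $\kappa$-complete normal ultrafilter. The delicate point --- and what I expect to be the main obstacle --- is verifying that $U^*$ actually lies in $V[\overline{G}]$, since a priori $j^*$ depends on the generic $G^*$ built in $V$, which need not itself lie in $V[\overline{G}]$. This is handled by a master-condition argument: for each $\mathbb{R}(\overline{\cW})$-name $\dot{A}$ for a subset of $\kappa$, whether $\check\kappa \in j(\dot{A})$ is already decided by the condition in $j(\mathbb{R}(\overline{\cW}))$ whose stem is the $j$-image of a suitable condition in $\overline{G}$ followed by $\kappa$ as its top entry, with trivial upper part. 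Since the choice of $G^*$ above this master condition is irrelevant to this decision, $U^*$ is definable in $V[\overline{G}]$ from $\overline{G}$ and the master-condition scheme, and $\kappa$ is measurable in $V[\overline{G}]$.
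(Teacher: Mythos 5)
The paper does not prove this lemma at all --- it is quoted directly from Radin (\cite[Claim~8]{Radin}) --- so your argument has to stand on its own. Its skeleton (pass to the ultrapower $j$ by a measure on the sequence, use the coherence from \cref{lemma:projection-coherence} to make $\kappa$ a legitimate stem point of $j(\mathbb{R}(\overline{\cW}))$, and read off a measure on $\kappa$ from decisions of $\check\kappa\in j(\dot A)$) is indeed the shape of Radin's proof. But the lifting as you describe it breaks down at the lower factor. By coherence, the lower factor of $j(\mathbb{R}(\overline{\cW}))$ at $\kappa$ is the Radin forcing for the \emph{truncated} sequence $\overline{\cW}(\kappa)\restriction\xi$, and by the Mathias criterion a generic club for that forcing must be almost contained in every set of $\bigcap_{\zeta<\xi}\overline{W}(\kappa,\zeta)$ lying in $M$ (note $\power(\kappa)\subseteq M$). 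The club $\overline{C}$ is not: running the paper's discreteness claim for the projected measures $\langle\overline{W}(\kappa,\zeta)\mid\zeta\leq\xi\rangle$ gives pairwise disjoint $B_\zeta\in\overline{W}(\kappa,\zeta)$, so $\kappa\setminus B_\xi\in\bigcap_{\zeta<\xi}\overline{W}(\kappa,\zeta)$, while $\overline{C}\cap B_\xi$ is unbounded, since the generic club meets every set that is positive for \emph{some} measure on the full sequence unboundedly often. Hence $\overline{G}$ is not $M$-generic for the lower factor and $j$ does not lift along it. The upper-part diagonalization is also miscounted: $M$ contains at least $|j(\kappa^{++})|^V=\kappa^{++}$ many dense open subsets of $\mathbb{R}^{\mathrm{up}}$, not $\kappa^+$.

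The real content of the lemma is concentrated in the step you defer to a ``master-condition scheme''. Radin's argument never builds $G^*$; it defines $A\in U^*$ iff for some $p\in\overline{G}$ and some name $\dot A$ for $A$, a direct extension of $j(p)$ with $\kappa$ (carrying $\overline{\cW}(\kappa)\restriction\xi$) inserted into the stem forces $\check\kappa\in j(\dot A)$. One must then prove that such a deciding $p$ exists for every $A$; that the decision is independent of the chosen name and of the measure-one sets shrunk inside $M$ (which, by the previous paragraph, need not correspond to conditions in $\overline{G}$); and that $\xi$ can be chosen above all of the $\leq\kappa^{+}$ many measure indices appearing in the relevant decision trees --- this is exactly where $o^{\overline{\cW}}(\kappa)=\kappa^{++}$ is used, and it is why $\xi$ cannot be an arbitrary ordinal fixed in advance of the analysis. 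As written, the proposal asserts these facts rather than proving them, and they are the lemma.
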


\subsection{Symmetric Model}
So, to summarize, we obtained two models: in the full Radin generic extension by $\cW$, $V[G]$, $\kappa$ is an inaccessible cardinal. Moreover, for every $\alpha < \kappa$ in the normal Radin club, $\overline{C}$, $\alpha$ is singular.

In the submodel $V[\overline{G}]$---the generic extension by the Radin club obtained from the projected measures---$\kappa$ is measurable. We would like now to consider an intermediate symmetric model, $W_1$, in which $\kappa$ remains measurable, $\overline{G}$ exists but every $\alpha$ in the Radin club is singular.

To make the definition of the automorphisms easier to understand we will adopt the conventions and represent a condition $\langle d_0,\dots, d_{n-1}, A\rangle$ as a finite sequence of pairs, $\vec{d} = \langle d'_0,\dots, d'_{n}\rangle$, whose last element is $d_n'=\langle \kappa^{+}, A\rangle$ where $A$ was previously the full measure set, and that for $d_i \in \power_\kappa\kappa^+$ will be replaced by $d_i'=\langle d_i, \varnothing\rangle$. In all other cases, $d_i = d_i'$. The stem of $p$ under this convention, therefore, is $p$ without its last coordinate.

Let us work in the symmetric model with respect to symmetries as the ones from \cite[Section~5]{CriticalCardinals}. Note, that unlike the case in \cite{CriticalCardinals}, here the filter of groups is actually $\kappa^{+}$-complete. For the completeness of this paper, let us spell out the group of automorphisms and the normal filter of groups.

\begin{definition}
  Let $g \colon \kappa^+ \to \kappa^+$ be a bijection, then $g$ lifts to $g_1 \colon \power_\kappa\kappa^+ \to \power_\kappa\kappa^+$ by $g_1(x) = g\image x$. Going further, we can lift $g_1$ to $g_2 \colon \power(\power_\kappa\kappa^+) \to \power(\power_\kappa\kappa^+)$ defined by $g_2(A) = g_1\image A$. We define $\sigma_g$ as the pointwise application of $g_1$ and $g_2$.  Namely, \[\sigma_g(\langle\langle a_0,B_0\rangle, \dots,\langle a_n,B_n\rangle\rangle)=\langle \langle g_1(a_0), g_2(B_0)\rangle, \dots, \langle g_1(a_n), g_2(B_n)\rangle\rangle.\]
\end{definition}
It is easy to verify that $\sigma_g$ is an automorphism of a dense subset of $\mathbb{R}(\cW)$, and so extends to an automorphism of the Boolean completion. So we can let $\cG$ be the group of all the automorphisms of the form $\sigma_g$ for some bijection $g\colon\kappa^+\to\kappa^+$.

\begin{definition}
For every $\alpha < \kappa^+$ let $H_\alpha$ be the group of automorphisms $\sigma_g$ for $g$ such that $g\restriction \alpha=\id$. Let $\cF = \langle\{H_\alpha \mid \alpha < \kappa^+\}\rangle$.
\end{definition}
\begin{proposition}
  $\cF$ is a normal filter of groups over $\cG$.
\end{proposition}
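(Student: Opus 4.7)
The plan is to verify the three defining properties of a normal filter of groups directly: non-emptiness together with closure under supergroups, closure under finite intersections, and closure under conjugation by elements of $\cG$. Non-emptiness and closure under supergroups are immediate since $\cF$ is defined as the filter generated by $\{H_\alpha\mid\alpha<\kappa^+\}$; note that $H_0=\cG$ is in the family.

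For closure under finite intersections, I would first observe that the generating family is linearly ordered by reverse inclusion: if $\alpha\leq\beta$, then any bijection fixing $\beta$ pointwise also fixes $\alpha$ pointwise, so $H_\beta\subseteq H_\alpha$ and hence $H_\alpha\cap H_\beta=H_{\max(\alpha,\beta)}$. Consequently $\cF$ is nothing more than the upward closure (under supergroups) of this chain, so binary intersection is trivial and the family is already closed under finite intersections.

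For normality, fix $H\in\cF$ and $\sigma_g\in\cG$. By the previous step, there is $\alpha<\kappa^+$ with $H_\alpha\subseteq H$, so it is enough to find $\beta<\kappa^+$ with $H_\beta\subseteq\sigma_g H_\alpha\sigma_g^{-1}$. The key calculation is that $g\mapsto\sigma_g$ is a group homomorphism, i.e.\ $\sigma_g\circ\sigma_{g'}=\sigma_{g\circ g'}$; this follows directly from the definitions $g_1(x)=g\image x$ and $g_2(A)=g_1\image A$, since composing image maps composes the underlying bijections. Therefore $\sigma_g H_\alpha\sigma_g^{-1}=\{\sigma_{ghg^{-1}}\mid h\restriction\alpha=\id\}$, and every such conjugate $ghg^{-1}$ fixes $g\image\alpha$ pointwise. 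Let $\beta=\sup(g\image\alpha)+1$; since $g$ is a bijection, $|g\image\alpha|=|\alpha|<\kappa^+$, and the regularity of $\kappa^+$ yields $\beta<\kappa^+$. Given $\sigma_f\in H_\beta$, set $h=g^{-1}\circ f\circ g$; for $\xi<\alpha$ we have $g(\xi)<\beta$, so $f(g(\xi))=g(\xi)$ and thus $h(\xi)=\xi$. Hence $\sigma_f=\sigma_g\sigma_h\sigma_g^{-1}\in\sigma_g H_\alpha\sigma_g^{-1}$, giving $H_\beta\subseteq\sigma_g H_\alpha\sigma_g^{-1}\subseteq\sigma_g H\sigma_g^{-1}$, so the conjugate lies in $\cF$.

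There is no real obstacle here; the only thing requiring care is bookkeeping of conventions, namely confirming that the action $g\mapsto\sigma_g$ is a homomorphism (so that conjugation on $\cG$ matches conjugation of the underlying bijections of $\kappa^+$) and that the cardinal arithmetic bound $|g\image\alpha|<\kappa^+$ keeps us inside the generating family. Everything else is a direct unwinding of the definitions.
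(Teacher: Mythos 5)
Your proof is correct and follows essentially the same route as the paper: the paper's argument is exactly the observation that $\sigma_h\in\sigma_g H_\alpha\sigma_g^{-1}$ iff $h\restriction(g\image\alpha)=\id$, together with the regularity of $\kappa^+$ to bound $\sup g\image\alpha$ below $\kappa^+$. Your version just spells out the routine filter axioms and the homomorphism $g\mapsto\sigma_g$ in more detail (and is slightly more careful in taking $\sup(g\image\alpha)+1$ rather than the supremum itself).
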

\begin{proof}
  Note that $\sigma_h\in\sigma_g H_\alpha\sigma_g^{-1}$ if and only if $h\restriction (g\image\alpha)=\id$. Since $\kappa^+$ is regular, let $\delta=\sup g\image\alpha$, then $H_\delta\subseteq\sigma_g H_\alpha\sigma_g^{-1}$.
\end{proof}
\begin{claim}
If $a\in C_G$, then $\power_{a\cap\kappa}a^{V[C_G]}\in W_1$.
\end{claim}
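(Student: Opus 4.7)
Let $\rho=a\cap\kappa$ and pick $\alpha<\kappa^{+}$ with $a\subseteq\alpha$ (e.g., $\alpha=\sup(a)+1$). The plan is to exhibit the candidate symmetric name
\[\dot P=\{(\dot X,\mathbb{1})\mid \dot X\in\HS\text{ and }\mathbb{1}\forces\dot X\subseteq\check a\wedge|\dot X|<\check\rho\},\]
and verify both $\dot P\in\HS$ and $\dot P^G=\power_\rho a^{V[C_G]}$. Symmetry is essentially free: every $\sigma_g\in\cG$ fixes the check names $\check a,\check\rho$, permutes $\HS$, and preserves $\forces$, so $\sigma_g\dot P=\dot P$ for all $\sigma_g\in\cG$, giving $\sym(\dot P)=\cG$. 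The inclusion $\dot P^G\subseteq\power_\rho a^{V[C_G]}$ is immediate from the definition.

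The substantive direction is $\power_\rho a^{V[C_G]}\subseteq\dot P^G$: every $X\subseteq a$ with $|X|<\rho$ in $V[C_G]$ must admit a hereditarily symmetric name. I would first apply the standard factorization of $\mathbb R(\cW)$ below any condition whose stem contains $a$: $\mathbb R(\cW)$ splits (in the forcing-equivalent sense) into a ``lower'' piece $\mathbb R_{\leq a}$ and an ``upper'' piece $\mathbb R_{>a}$. Using the $\kappa$-completeness of the measures $\cW(\kappa,\zeta)$ appearing above $a$ together with a strong Prikry property as in Section~\ref{section:upper-bound}, the upper piece adds no subsets of $a$ of cardinality $<\rho$, so $X\in V[G_{\leq a}]$ for the generic $G_{\leq a}$ of the lower factor. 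Next, observe that every condition $q\in\mathbb R_{\leq a}$ has stem elements $\ssubset a\subseteq\alpha$ and measure sets living in $\power_{\rho'}(\rho')^{+}$ for various $\rho'\leq\rho$, so every ordinal appearing in $q$ lies below $\alpha$; hence $\sigma_g q=q$ for every $\sigma_g\in H_\alpha$. If $\dot X_0\in V$ is an $\mathbb R_{\leq a}$-name with $\dot X_0^{G_{\leq a}}=X$, the nice $\mathbb R(\cW)$-name
\[\dot X=\{(\check\xi,q)\mid\xi\in a,\;q\in\mathbb R_{\leq a},\;q\forces\check\xi\in\dot X_0\}\]
then satisfies $\sigma_g\dot X=\dot X$ for every $\sigma_g\in H_\alpha$, and its subnames $\check\xi$ are trivially hereditarily symmetric; so $\dot X\in\HS$ with $\dot X^G=X$, placing $X\in\dot P^G$.

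The main obstacle is the factorization step: verifying inside Krueger's framework that the upper factor of $\mathbb R(\cW)$, after pinning down $a$ in the stem, is closed enough to avoid adding new subsets of $a$ of size $<\rho$. Once this Prikry-type closure is in hand, the remainder of the argument is a routine name-theoretic invariance computation.
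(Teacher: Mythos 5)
Your argument is essentially the paper's own, written out in full: the paper's proof is a two-line sketch noting that $\delta=\sup a<\kappa^{+}$ is bounded and asserting that $H_\delta$ fixes a name for $a$ and for each of its subsets, which is precisely what your factorization argument substantiates. The step you flag as the main obstacle is the real (implicit) content, and it does go through: below a condition having $a$ and its $C_G$-successor $a'$ on the stem, Krueger's factorization splits $\mathbb R(\cW)$ into a lower and an upper part; the upper part's measures concentrate on $y$ with $y\cap\kappa>|a|=\rho^{+}$, so its direct extension order is more than $\rho^{+}$-closed, and the Prikry property then yields that the upper factor adds no new subsets of $a$ over the extension by the lower one. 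Two small repairs are needed in your write-up. First, $\dot P$ as defined is a proper class, since it ranges over all of $\HS$; either restrict to nice names built from lower-factor conditions, or---more simply---observe that $G_{\leq a}$ itself has an $H_\alpha$-fixed name (its conditions are fixed pointwise by $H_\alpha$), so $V[G_{\leq a}]\subseteq W_1$, and $\power_{\rho}a^{V[G_{\leq a}]}=\power_{\rho}a^{V[C_G]}$ is then an element of that $\ZFC$-submodel and hence of $W_1$, with no need for the global name $\dot P$ at all. Second, the pairs $(\check\xi,q)$ with $q\in\mathbb R_{\leq a}$ do not literally form an $\mathbb R(\cW)$-name: you must re-embed each $q$ as a full condition, and to preserve $H_\alpha$-invariance you should append the trivial upper part $\langle\langle\kappa^{+},\power_\kappa\kappa^{+}\rangle\rangle$, which every $\sigma_g$ fixes, rather than the upper part of the ambient condition $p$.
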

\begin{proof}
Let $a\in C_G$. Then, $\delta = \sup a \cap \kappa^{+}$ is bounded. Thus, $H_\delta$ must fix the canonical name for $a$ and any subset of $a$ will have a name fixed by $H_\delta$ as well.
\end{proof}
\begin{claim}
$\overline C\in W_1$.
\end{claim}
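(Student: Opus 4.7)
The plan is to write down the natural name for $\overline C$ and observe that it is fixed by $H_\kappa$, which is in $\cF$ since $\kappa<\kappa^+$.

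The key observation is purely combinatorial: if $\sigma_g\in H_\kappa$, then $g\restriction\kappa=\id$ and, because $g$ is a bijection of $\kappa^+$, $g$ also permutes $[\kappa,\kappa^+)$ setwise. Consequently, for every $a\in\power_\kappa\kappa^+$, \[g\image a\cap\kappa=g\image(a\cap\kappa)\cup\bigl(g\image(a\setminus\kappa)\cap\kappa\bigr)=a\cap\kappa\cup\varnothing=a\cap\kappa.\]

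With this in hand I would take the canonical name \[\dot{\overline C}=\{\langle\check\beta,p\rangle\mid p\in\mathbb R(\cW),\ \beta<\kappa,\ \exists i\leq\len p\,(a_i^p\cap\kappa=\beta)\}.\] The identity $\dot{\overline C}^G=\overline C$ is immediate from the definitions of $C_G$ as the union of stems of conditions in $G$ and of $\overline C$ as $\{x\cap\kappa\mid x\in C_G\}$. To verify symmetry, note that for $\sigma_g\in H_\kappa$ the left coordinates $\check\beta$ are fixed, while $a_i^{\sigma_g p}=g\image a_i^p$ satisfies $a_i^{\sigma_g p}\cap\kappa=a_i^p\cap\kappa$ by the displayed identity; hence $\sigma_g\dot{\overline C}=\dot{\overline C}$. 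Hereditary symmetry is then automatic, because the only names appearing in $\dot{\overline C}$ are check names of ordinals, which are fixed by every element of $\cG$.

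I do not anticipate any genuine obstacle: this is exactly the argument of the preceding claim, but applied one level up, using $H_\kappa$ (which is still in $\cF$ because $\kappa<\kappa^+$) in place of $H_\delta$ for some $\delta<\kappa^+$ bounding the support of a single $a\in C_G$. The only point that requires a small verification is the setwise-preservation of $[\kappa,\kappa^+)$ by elements of $H_\kappa$, from which the crucial identity $g\image a\cap\kappa=a\cap\kappa$ drops out.
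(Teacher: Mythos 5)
Your proof is correct and is essentially the paper's own argument: write down the canonical name for $\overline C$ built from the stems of conditions and observe that it is fixed by $H_\kappa\in\cF$, since any $g$ with $g\restriction\kappa=\id$ satisfies $g\image a\cap\kappa=a\cap\kappa$. The paper states this in one line; you have merely spelled out the (correct) combinatorial verification.
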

\begin{proof}
$\{\langle p,\check\alpha\rangle\mid \alpha\in x\in\stem p\}$ is a name for $\overline C$, and it is preserved by $H_\kappa$.
\end{proof}
In order to show that $\kappa$ remains measurable in the symmetric extension, we need to show that every symmetric subset of $\kappa$ is introduced by a small forcing over the model $V[\overline{C}]$.
\begin{lemma}
Let $X$ be a symmetric set of ordinals. Then, there is a forcing notion of cardinality $<\kappa$, $\mathbb{Q}$, such that $A$ is introduced by $\mathbb{Q}$ over $V[\overline{C}]$.
\end{lemma}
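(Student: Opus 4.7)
Let $\dot X$ be a hereditarily symmetric name for $X$, and fix $\alpha<\kappa^+$ with $H_\alpha\subseteq\sym(\dot X)$. My plan is to factor the supercompact Radin forcing through $V[\overline C]$ and then descend to a subforcing capturing exactly the $H_\alpha$-invariant information. First I would use the projection $a\mapsto a\cap\kappa$ to factor $V[G]=V[\overline C][\tilde G]$, where $\tilde G$ is generic over $V[\overline C]$ for the quotient forcing $\tilde{\mathbb R}$. The $H_\alpha$-action descends to $\tilde{\mathbb R}$, and for every ordinal $\beta$ the Boolean value $\|\check\beta\in\dot X\|$ is $H_\alpha$-invariant, so all these Boolean values lie in the complete $H_\alpha$-fixed subalgebra $\mathbb Q\subseteq\tilde{\mathbb R}$. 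Interpreting $\dot X$ as a $\mathbb Q$-name yields $X\in V[\overline C][H]$ with $H=\tilde G\cap\mathbb Q$.

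The technical heart of the proof is then to bound $|\mathbb Q|^{V[\overline C]}<\kappa$. A condition in $\tilde{\mathbb R}$ is essentially a finite stem $\langle a_0,\dots,a_{n-1}\rangle$ with each $a_i\cap\kappa\in\overline C$, together with measure one residues, and two stems are $H_\alpha$-equivalent iff their $\alpha$-profiles $\langle a_i\cap\alpha\rangle_{i<n}$ agree. Using the strong Prikry property established earlier in this section, every Boolean value in $\mathbb Q$ is decided by a finite stem, so $\mathbb Q$ is densely generated by these $\alpha$-profiles. When $\alpha<\kappa$ one has $a_\eta\cap\alpha=(a_\eta\cap\kappa)\cap\alpha=\min(\eta,\alpha)$, which is determined by $\overline C$ and $\alpha$ alone, so $\mathbb Q$ is trivial and $X\in V[\overline C]$.

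The main obstacle is the case $\alpha\ge\kappa$, where $|\alpha|^V=\kappa$ and the naive count gives $\kappa$-many candidate $\alpha$-profiles below a given condition of $\tilde G$. To extract from $\mathbb Q$ a dense subforcing of size $<\kappa$ in $V[\overline C]$, one must exploit the coherence of the measure sequence $\cW$ (as in \cref{lemma:projection-coherence}) together with the Mostowski collapse identification of the generic elements of $C_G$ with their images in $\power_\rho\alpha$, so that the $\alpha$-profiles appearing below a condition in $\tilde G$ collapse into a bounded family. This careful interplay between the symmetric-forcing bookkeeping and the structural features of the supercompact Radin forcing is the delicate step on which the argument depends.
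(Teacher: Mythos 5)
Your overall architecture (pass to the quotient over $V[\overline C]$, isolate the $H_\zeta$-invariant information, and argue it lives in a small subforcing) is a reasonable reframing of what the paper does, but the proof has a genuine gap exactly at the point you flag as ``the delicate step'': you never show that the $H_\zeta$-orbit data of stems below a fixed condition forms a family of size $<\kappa$ when $\zeta\in[\kappa,\kappa^+)$. Counting naively, a stem element $a\in\power_\kappa\kappa^+$ can realize up to $\kappa$ (indeed $\kappa^+$) many distinct traces $a\cap\zeta$ even after fixing $a\cap\kappa$, so the ``fixed subalgebra'' you define has no a priori reason to be small, and the tools you point to (the coherence of $\cW$ from \cref{lemma:projection-coherence}, or Mostowski collapses) do not supply the bound. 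The idea that actually closes this gap in the paper is different: fix a well-order of $H(\kappa^{++})$ and shrink the measure-one set of the condition to the club $D$ of $x$ with $x=\Hull^{H(\kappa^{++})}(x)\cap\kappa^+$ and $\zeta\in x$. For such $x$, the bijection $h\colon\kappa\to\zeta$ lying in $\Hull^{H(\kappa^{++})}(\{\zeta\})$ gives $x\cap\zeta=h\image(x\cap\kappa)$, so the trace of $x$ on $\zeta$ is a \emph{function of} $x\cap\kappa$, hence determined by $\overline C$. This is what collapses your ``$\zeta$-profiles'' to a single possibility per stem pattern and is the step your write-up is missing.

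Two further points. First, with that hull trick in hand, the paper does not need your abstract fixed-subalgebra $\mathbb Q$ at all: it factors the forcing below $p$ as $\mathbb R_0\times\mathbb R_1$ at the top stem element $y$ and takes $\mathbb Q=\mathbb R_0/(\overline C\restriction\rho)$, which is trivially of size $2^{|y|}<\kappa$; the invariance argument is then carried out by explicitly building an automorphism $\sigma_f\in H_\zeta$ matching two stems that agree below $y$ and have equal traces on $\kappa$ (using $\otp a_i=(a_i\cap\kappa)^+$ to extend the partial bijections). Second, your claim that two stems are $H_\zeta$-equivalent iff their $\zeta$-profiles agree is asserted rather than proved, and the forward construction of the witnessing bijection requires exactly the order-type and cardinality bookkeeping above; as written, your argument would not survive without it.
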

\begin{proof}
Let $\dot{X}\in\HS$ be a name for the set $A$. So, there is $\zeta < \kappa^{+}$ such that $H_\zeta$ witnesses that $\dot X\in\HS$. Without loss of generality, $\zeta \geq \kappa$.

Fix a well order of $H(\kappa^{++})$ and let $D$ be the club of all $x \in \power_{\kappa}\kappa^{+}$ which satisfy $x = \Hull^{H(\kappa^{++})}(x) \cap \kappa^+$.\footnote{For a structure $M$ with a fixed well order, we denote the Skolem hull of a set $x \subseteq M$ in $M$ by $\Hull^{M}(x)$.} Shrinking $D$ to a measure one set, we may assume that $\otp x = (x\cap \kappa)^+$ for all $x \in D$.

We claim that for every $z \in D$, such that $\zeta \in z$, $z \cap \zeta$ is fully determined by $z \cap \kappa$. Indeed, in $\Hull^{H(\kappa^{++})}(\{\zeta\})$ there is a bijection $h$ between $\kappa$ and $\zeta$, and thus as $z = \Hull^{H(\kappa^{++})}(z) \cap \kappa^{+}$, $h\image (z\cap \kappa)  = z \cap \zeta$.  


Let us isolate a forcing notion of cardinality ${<}\kappa$ that introduces $\dot{X}$ over $V[\overline{C}]$.

Fix a condition $p =\langle \vec d, \langle \kappa^{+}, A\rangle\rangle$. By shrinking the large set $A$, we may assume that $A \subseteq D \cap \{x \in \power_\kappa\kappa^{+} \mid \zeta \in x\}$.

Without loss of generality we may assume that $\len p > 0$, and let $y = a^p_{\len p - 1}$. Below $p$, the forcing is isomorphic with the product $\mathbb{R}_0 \times \mathbb{R}_1$, where $\mathbb{R}_0$ is the forcing $\mathbb{R}(\mathcal{W}\restriction \rho + 1)$ below some condition, where $\rho = y \cap \kappa$, and $\mathbb{R}_1$ is $\mathbb{R}(\mathcal{W})$ below the condition $\langle\langle \kappa^{+}, A\rangle\rangle$.

The forcing $\mathbb{R}_0$ introduces the initial segment of $\overline{C}$ up to $\rho$, so the forcing $\mathbb{Q} = \mathbb{R}_0 /(\overline{C}\restriction \rho)$ is a well defined forcing notion in $V[\overline{C}]$, introducing $\pi_y(C_G \cap y)$. Note that $|\mathbb{Q}|\leq 2^{|y|} < \kappa$. Let us show that $p$ forces that $\dot{X}$ is introduced by $\mathbb{Q}$ over $V[\overline C]$.

Let $q \leq p$ such that $q\forces\check\beta \in \dot{X}$. We claim that whenever $q' \leq p$ is a condition of the same length as $q$ and $a_i^{q'} = a_i^q$ for $a_i^q, a_i^{q'} \subseteq y$ and for every $i$, $a_i^q \cap \kappa = a_i^{q'}\cap \kappa$, it is impossible that $q' \forces\check\beta\notin \dot{X}$. This would be enough, as we can define a $\mathbb{Q}$-name $\dot{Y}$ in $V[\overline{C}]$ by \[s \forces_{\mathbb{Q}} \check \beta \in \dot{Y} \iff \exists r \in \mathbb{R}_1/(\overline{C}\restriction [\rho,\kappa)),\, (s, r) \leq p, \, (s,r)\forces_{\mathbb{R}(\mathcal{W})} \check\beta\in\dot{X}.\]
(Note that we identify $\mathbb{R}_0\times\mathbb{R}_1$ with $\mathbb{R}(\cW)$ below $p$.)

Indeed, by extending $q$ and $q'$ if needed, we may assume that \[q = \langle d_0,\dots, d_{n}\rangle, q' = \langle d'_0,\dots, d'_{n}\rangle\] with $d_i = \langle y_i, B_i\rangle$, $d_i' = \langle y_i', B_i'\rangle$. For some $k<n$, $y_k=y$, and so for all $j<k$, $y_j=y_j'$ and for every $j\geq k$, $y_j\cap\kappa=y_j'\cap\kappa$. Given such $j\geq k$, we have that $\zeta\in y_j\cap y_j'$, and so $\zeta\cap y_j=\zeta\cap y_j'$.

We can now define an automorphism $\sigma_f$ sending the stem of $q$ to the stem of $q'$, similarly to the automorphism defined in \cite{CriticalCardinals}. By induction on $j \geq k$, let us define a bijection $f_j \colon y_j \to y_j'$ such that $f_k = id_{y_k}$ and $f_j\restriction \zeta$ is the identity. Let as assume that $f_{j-1}$ is defined and let us define $f_j$. As $\otp y_j=\otp y_j' = (y_j\cap \kappa)^+$, $|y_j \setminus y_{j-1} \setminus \zeta| = |y'_j \setminus y'_{j-1} \setminus \zeta|$. Indeed, the cardinality of $y_j\cap \zeta$ is $y_j\cap \kappa$ and the cardinality of $y_{j-1}, y'_{j-1}$ is strictly smaller. So, we can find $f_j$ extending $f_{j-1}$, as needed. Finally, let $f = f_n$ and by the construction $\sigma_f\in H_\zeta$ and $\sigma_f(q)$ is compatible with $q'$. Therefore they must agree on the truth value of $\check\beta\in\dot X$, as both names are preserved by $\sigma_f$.\footnote{Here the main advantage of using coherent sequence instead of measure sequence manifests itself. In order to make the stems compatible, it is make the sets of $\power_\kappa\kappa^{+}$ on the stems identical, and we do not need to worry about the choice of the measure sequences.}
\end{proof}
As every set of ordinals is added by a small forcing, any normal measure in $V[\overline C]$ on $\kappa$ extends to a measure in $W_1$, as shown by Jech in \cite{Jech}.

\begin{lemma}
  $W_1$ is $\kappa$-closed in $V[C_G]$.
\end{lemma}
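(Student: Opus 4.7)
The plan is to show that every $f\colon\gamma\to W_1$ with $\gamma<\kappa$ and $f\in V[C_G]$ admits a hereditarily symmetric name, hence lies in $W_1$.

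For each $i<\gamma$, since $f(i)\in W_1=\HS^G$, I pick a hereditarily symmetric name $\dot y_i\in V$ with $\dot y_i^G=f(i)$, fixed by some $H_{\alpha_i}$ with $\alpha_i<\kappa^+$. The first auxiliary step is to verify that $\mathbb R(\cW)$ preserves $\kappa^+$ as a regular cardinal; this follows from a $\kappa^{++}$-chain-condition argument analogous to the one in the inaccessibility proof above. Consequently $\alpha^*:=\sup_{i<\gamma}\alpha_i<\kappa^+$, and by the $\kappa^+$-completeness of $\cF$ (already noted in the paragraph defining $\cF$) every $\dot y_i$ is in fact fixed by $H_{\alpha^*}$.

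The main step is to show that the sequence $\langle\dot y_i\mid i<\gamma\rangle$ can be taken as an element of $V$. Fix a $V$-name $\dot f$ for $f$. The argument mirrors the proof of the preceding lemma: for $\dot y\in\HS$ fixed by $H_{\alpha^*}$, the forcing relation ``$p\forces \dot y=\dot f(\check i)$'' depends only on the $\kappa$-projections of the stem of $p$, because any two conditions with matching finite-stem $\kappa$-projections (and stem-sets in the Skolem-closed club $D$ used in the preceding lemma) are interchanged by an automorphism $\sigma_g\in H_{\alpha^*}$ that fixes $\dot y$ and---after replacing $\dot f$ with an $H_{\alpha^*}$-symmetrized version of itself---fixes $\dot f(\check i)$ as well. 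Combining this invariance with the $\kappa$-closure of the direct-extension order $\leq^*$ (which holds because the measures $W(\kappa,\zeta)$ are $\kappa$-complete and $\cF$ is $\kappa^+$-complete) and the strong Prikry property, one synthesizes inside $V$ a single condition $p\in G$ together with a $V$-sequence $\langle \dot y_i\rangle$ of $H_{\alpha^*}$-fixed HS names satisfying $p\forces\dot y_i=\dot f(\check i)$ for every $i<\gamma$.

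Once the sequence is in $V$, the combined name $\dot F=\{\langle p,\langle\check i,\dot y_i\rangle\rangle\mid i<\gamma\}$ lies in $V$, is fixed by $H_{\alpha^*}$ (hence hereditarily symmetric), and satisfies $\dot F^G=f$ because $p\in G$ forces $\dot F=\dot f$. The main obstacle is the synthesis step: turning a generic-dependent choice of HS names into a $V$-definable one. The lever is precisely the $H_{\alpha^*}$-invariance of the forcing relation for names of bounded support, which is the same mechanism that drove the preceding lemma; the additional ingredient is the use of $\leq^*$-closure to handle all $i<\gamma$ in parallel.
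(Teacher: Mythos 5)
There is a genuine gap at what you yourself identify as the main obstacle: the ``synthesis step'' that is supposed to move the sequence $\langle\dot y_i\mid i<\gamma\rangle$ from $V[C_G]$ into $V$. The mechanism you propose --- that $p\forces\dot y=\dot f(\check i)$ depends only on the $\kappa$-projections of the stem because a suitable $\sigma_g\in H_{\alpha^*}$ fixes both $\dot y$ and ``an $H_{\alpha^*}$-symmetrized version of $\dot f$'' --- is circular: $\dot f$ is an arbitrary $\mathbb{R}(\cW)$-name for an element of $V[C_G]$, not a symmetric one (producing a symmetric name equivalent to $\dot f$ is precisely what the lemma asks for), so the automorphism argument from the preceding lemma does not transfer to it. Moreover, even granting such homogeneity, no single $V$-sequence of names can be forced to work by a single condition: which hereditarily symmetric name evaluates to $f(i)$ genuinely depends on the generic, so at best one obtains, for each $i$, an antichain of conditions each paired with its own candidate name. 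The paper's proof embraces this rather than fighting it: it uses the chain condition to collect \emph{all} candidates into a single $V$-function $F$ with domain some $\xi<\kappa$ (using $\cf^{V[C_G]}(\kappa^+)=\kappa$ and the regularity of $\kappa$ in $V[C_G]$ to shrink the domain), and then shows that the \emph{selector} $c\subseteq\xi$ --- the only generic-dependent ingredient left --- admits a symmetric name, by deciding $\dot c$ along a $\leq^*$-descending sequence of length $\xi$ (Krueger's Lemma 3.1) when $\xi$ lies below the stem, and by factorizing the forcing and absorbing the decision into the symmetric lower-part generic otherwise. Your proposal has no counterpart to either of these steps, and this is where the actual content of the lemma lives.

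Two smaller errors. First, $\kappa^+$ is \emph{not} preserved as a regular cardinal: $C_G$ is a $\subseteq$-increasing $\kappa$-sequence cofinal in $\power_\kappa\kappa^+$, so $\cf^{V[C_G]}(\kappa^+)=\kappa$ (the paper's proof uses exactly this fact); your conclusion $\alpha^*<\kappa^+$ survives only because $\gamma<\kappa=\cf^{V[C_G]}(\kappa^+)$, not because of any chain-condition argument. Second, the direct-extension order is not $\kappa$-closed below an arbitrary condition: its degree of closure is bounded by the least $a\cap\kappa$ occurring in the stem, which is precisely why the paper must split into cases and invoke the factorization $\mathbb R(\cW)\restriction p\cong\mathbb R(\cW)\restriction p^{\leq m}\times\mathbb R(\cW)\restriction p^{>m}$; your blanket appeal to $\leq^*$-closure skips this entirely.
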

\begin{proof}
Suppose that for some $\delta<\kappa$ and $f\in V[C_G]$, $f\colon\delta\to W_1$. We will show that $f\in W_1$ as well. Fix $p\in G$. In order to construct a symmetric name for $\dot f$, we need to show that the sequence of \emph{names} $\dot f(\alpha)$ is equivalent to a symmetric name.

\begin{claim}
There is an ordinal $\xi < \kappa$ and a function $F$ in $V$, with domain $\xi$ and range contained in the symmetric names, such that in $V[C_G]$ there is a set $c \subseteq \xi$ satisfying $(F\image c)^G = \dot{f}^G$.
\end{claim}
\begin{proof}
Using the chain condition, for every $\alpha < \delta$, we can find a collection $S_\alpha$ of $\leq \kappa^{+}$ many pairs of the form $\langle \alpha,\dot{x}\rangle$ where $\dot{x}$ is a symmetric name and $p$ forces that $\langle \alpha, \dot{f}^G(\alpha)\rangle$ equals to an evaluation of a member of $S_\alpha$.

Let $F_0$ be a function with domain $\kappa^{+}$ covering $\bigcup_{\alpha < \delta} S_\alpha$. In $V[C_G]$ we can find a set $a_0$ of size $<\kappa$ such that $F_0\image a_0$ already contains for each $\alpha < \delta$ a name which is going to be evaluated as $f(\alpha)$. As $\cf^{V[C_G]} \kappa^{+} = \kappa$, $a_0$ is bounded, by some ordinal $\zeta_0$. Extending $p$ if necessary, we may decide the value of $\zeta_0$. Let $H\colon \kappa \to \zeta_0$ a surjection and let $F_1 = F_0 \circ H$, $c = H^{-1}(a_0)$. Applying the same argument for $\kappa$, which remains regular in $V[C_G]$, $\sup c < \kappa$ and by extending $p$ again, we may decide it to be some $\xi$. Finally, let $F = F_1 \restriction \xi$.
\end{proof}

Fix a name $\dot{c}$ for the set obtained in the claim.

Using \cite[Lemma~3.1]{Krueger}, if $\xi < a\cap\kappa$ for every $\langle a,A\rangle\in\stem p$, we can find a descending sequence of direct extensions $p_\alpha$ for $\alpha\leq\xi$. Thus, we can decide the value of the set $\dot{c}$ from the claim using a sequence of $\xi$ direct extensions. As this can be done densely below $p$, it means that $f\in W_1$ as wanted.

If, however, it is not the case that $\xi<a\cap\kappa$ for all $a$ in the stem of $p$, then by applying Lemma~4.1 and Lemma~5.8 from \cite{Krueger} we can split $p$ into two parts $p^{\leq m}$ and $p^{>m}$, factorize $\mathbb{R}(\cW)\restriction p=\mathbb R(\cW)\restriction p^{\leq m}\times\mathbb R(\cW)\restriction p^{>m}$ and make the above argument in $\mathbb R(\cW)\restriction p^{>m}$. So, the value of $\dot{c}^G$ can be computed using the generic for the lower part, $\mathbb R(\cW)\restriction p^{\leq m}$, $G_{low}$. As $G_{low}$ is symmetric, we conclude that there is a symmetric name equivalent to $\dot{c}$ and thus also the set of names $F \image \dot{c}$ is symmetric.

Now, in $V$, for each $\alpha < \xi$, let $s(\alpha)=\beta$ be the least ordinal such that $H_\beta \subseteq \sym(F(\alpha))$, and let $\zeta < \kappa^{+}$ be large enough so that $s\image \xi \subseteq \zeta$. So, every automorphism from $H_\zeta$ that fixed the generic for $\mathbb R(\cW)\restriction p^{\leq m}$ fixes every member of $F\image \dot{c}$.

Thus, we obtain that $W_1$ is $\kappa$-closed in $V[C_G]$, as wanted.
\end{proof}
This lemma provides us with two important corollaries.
\begin{corollary}
  $W_1\models\DC_{<\kappa}$.
\end{corollary}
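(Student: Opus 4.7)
The corollary should follow immediately from the preceding lemma's $\kappa$-closure of $W_1$ in $V[C_G]$. The plan is, for each fixed $\lambda<\kappa$, to show that every $\lambda$-closed tree $T\in W_1$ without a maximal node contains a chain of order type $\lambda$ lying in $W_1$.

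First I would lift the $\lambda$-closure of $T$ from $W_1$ to $V[C_G]$. Given any strictly increasing sequence $\langle t_\alpha\mid\alpha<\beta\rangle$ in $T$ of length $\beta<\lambda$ that lives in $V[C_G]$, its range is contained in $T\subseteq W_1$, so by the $\kappa$-closure of $W_1$ in $V[C_G]$ the sequence itself belongs to $W_1$, where $\lambda$-closure supplies an upper bound in $T$. Thus $T$ is $\lambda$-closed in $V[C_G]$ as well, and of course still has no maximal node there.

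Working now in $V[C_G]$, which is a $\ZFC$ model and hence certainly satisfies $\DC_\lambda$, I can recursively build a chain $\langle t_\alpha\mid\alpha<\lambda\rangle$ in $T$: at successor stages pick any strict extension (which exists because $T$ has no maximal node), and at limit stages use the $\lambda$-closure of $T$. The resulting chain is a function from $\lambda$ into $T\subseteq W_1$, and since $\lambda<\kappa$, the $\kappa$-closure of $W_1$ in $V[C_G]$ forces it to lie in $W_1$, as required. There is no real obstacle here; this is essentially the standard observation that $\kappa$-closure of a transitive class inside a $\ZFC$ model automatically transfers $\DC_{<\kappa}$ downward.
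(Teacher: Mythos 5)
Your argument is correct and is essentially the paper's approach: the paper derives $\DC_{<\kappa}$ in $W_1$ from the $\kappa$-closure of $W_1$ in the $\ZFC$ model $V[C_G]$ by citing Lemma~3.2 of \cite{Karagila:DC}, and your proof is a direct verification of exactly that implication using the tree formulation of $\DC_\lambda$ given in the preliminaries. No gaps; you have simply unpacked the cited lemma.
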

\begin{proof}
By \cite[Lemma~3.2]{Karagila:DC}, since $V[C_G]$ is a model of $\ZFC$ and $W_1$ is $\kappa$-closed, $W_1\models\DC_{<\kappa}$.
\end{proof}
\begin{corollary}
  $V_\kappa^{V[C_G]}=V_\kappa^{W_1}$. In particular, $V_\kappa^{W_1}\models\ZFC$.\qed
\end{corollary}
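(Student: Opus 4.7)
The plan is to prove the equality by double inclusion. The inclusion $V_\kappa^{W_1}\subseteq V_\kappa^{V[C_G]}$ is immediate from $W_1\subseteq V[C_G]$ together with the absoluteness of the rank function between transitive classes. For the reverse inclusion, I would argue by induction on rank, leveraging both the $\kappa$-closure of $W_1$ in $V[C_G]$ from the preceding lemma and the earlier lemma which established that $\kappa$ remains inaccessible in $V[C_G]$.

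Concretely, suppose inductively that every element of $V[C_G]$ of rank strictly below $\alpha<\kappa$ belongs to $W_1$, and let $x\in V[C_G]$ have rank $\alpha$. Inaccessibility of $\kappa$ in $V[C_G]$ gives $|V_\alpha^{V[C_G]}|<\kappa$, so in particular there is some $\gamma<\kappa$ and an enumeration $\langle x_i : i<\gamma\rangle$ of $x$ inside $V[C_G]$. Each $x_i$ has rank strictly below $\alpha$, so by the induction hypothesis it lies in $W_1$. Thus the enumeration is a function $\gamma\to W_1$ lying in $V[C_G]$, and the $\kappa$-closure of $W_1$ in $V[C_G]$ places it in $W_1$; then $x$, being its range, also lies in $W_1$.

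For the ``in particular'' clause, $V[C_G]$ is a forcing extension of a model of $\ZFC$ and hence itself models $\ZFC$; since $\kappa$ is inaccessible there, $V_\kappa^{V[C_G]}\models\ZFC$, and the established equality transports this to $V_\kappa^{W_1}$. I do not anticipate any serious obstacle here: the substantive work has already been done in the preceding $\kappa$-closure lemma, and the only point worth flagging is that the induction bootstraps successfully because every invocation of $\kappa$-closure is along a sequence of length strictly less than $\kappa$, so the enumerations used to recover $x$ always live within the scope of the closure hypothesis.
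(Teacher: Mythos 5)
Your argument is correct and is exactly the reasoning the paper leaves implicit: the corollary is stated with an immediate \qed, the intended justification being precisely that the $\kappa$-closure of $W_1$ in $V[C_G]$, combined with the inaccessibility of $\kappa$ in $V[C_G]$, yields $V_\kappa^{V[C_G]}\subseteq W_1$ by induction on rank. Your rank induction and the transport of $\ZFC$ via the equality fill in that omitted routine verification faithfully, so there is nothing to add.
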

\section{The least inaccessible cardinal}
Let us strengthen \cref{thm:non-mahlo} by collapsing cardinals below $\kappa$ to make it the least inaccessible.
\begin{theorem}\label{thm:first-inaccessible}
There is a symmetric extension of $W_1$ where $\kappa$ remains measurable and is the least inaccessible cardinal. In particular, if $\GCH$ holds and $\kappa$ is a $\kappa^{++}$-supercompact, then there is a symmetric extension in which $\kappa$ is a measurable cardinal which is the least inaccessible cardinal.
\end{theorem}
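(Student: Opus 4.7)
The plan is to iterate the symmetric construction from \cref{thm:non-mahlo}: on top of $W_1$, perform a Radin-guided Lévy collapse and then take a symmetric submodel so that only bounded initial segments of the collapse generic are named. Enumerate the Radin club $\overline C \in W_1$ continuously as $\{c_\xi : \xi < \kappa\}$. Since $V_\kappa^{W_1} \models \ZFC$ by the final corollary of \cref{section:upper-bound}, the Easton-support product
\[\mathbb{C} = \prod^{\mathrm{Easton}}_{\xi<\kappa} \Col(c_\xi^+, <c_{\xi+1})\]
is a well-defined forcing in $W_1$. If $H \subseteq \mathbb{C}$ is $W_1$-generic, then in $W_1[H]$ every cardinal strictly between consecutive Radin points is collapsed to $c_\xi^+$, and each successor Radin point $c_{\xi+1}$ becomes $(c_\xi^+)^+$. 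Combined with the fact that the accumulation points of $\overline C$ are singular, no cardinal below $\kappa$ remains inaccessible in $W_1[H]$.

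To preserve the normal measure on $\kappa$, pass to a symmetric submodel of $W_1[H]$. Let $\cG^*$ denote the product over $\xi < \kappa$ of the natural permutation-induced automorphism groups of each factor $\Col(c_\xi^+, <c_{\xi+1})$, and for each $\xi < \kappa$ let $K_\xi \leq \cG^*$ be the subgroup of automorphisms that act trivially on all coordinates of index $<\xi$. Then $\cF^* = \langle\{K_\xi : \xi < \kappa\}\rangle$ is a $\kappa$-complete normal filter of subgroups, and we let $W_2$ be the corresponding symmetric extension of $W_1$ by $\mathbb{C}$. For each $\xi<\kappa$ the initial segment $H\restriction\xi = H \cap \prod_{\eta<\xi}\Col(c_\eta^+, <c_{\eta+1})$ has a canonical name stabilised by $K_\xi$, and therefore lies in $W_2$. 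Hence in $W_2$ every cardinal below $c_\xi$ is singular or collapsed, so $\kappa$ is the least inaccessible cardinal of $W_2$.

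To show $\kappa$ remains measurable, adapt the small-forcing absorption argument used for $W_1$. Given a hereditarily symmetric $\mathbb{C}$-name $\dot X$ for a set of ordinals with $K_\xi \subseteq \sym(\dot X)$, the tail-closure of $\prod_{\eta\geq\xi}\Col(c_\eta^+, <c_{\eta+1})$ together with a homogeneity argument shows that $\dot X^H$ depends only on $H\restriction\xi$, which is generic for a forcing of size $<\kappa$ over $W_1$. Combining with the small-forcing lemma for $W_1$ (via Krueger's factorisation of $\mathbb{R}(\cW)$), every set of ordinals in $W_2$ is added over $V[\overline C]$ by a forcing of size $<\kappa$, and Jech's theorem extends the normal measure on $\kappa$ from $V[\overline C]$ to $W_2$. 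The main obstacle is this compound small-forcing analysis: one must carefully interleave the Krueger factorisation of $\mathbb{R}(\cW)$ with the Easton factorisation of $\mathbb{C}$, verifying that symmetric names in the iterated framework behave well under both. Once this is established, $\DC_{<\kappa}$ and $V_\kappa^{W_2}\models\ZFC$ follow as in the final corollaries of \cref{section:upper-bound}, since the tail of $\mathbb{C}$ past any $c_\xi$ is sufficiently closed to make $W_2$ a $\kappa$-closed submodel of $V[C_G][H]$.
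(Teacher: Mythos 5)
Your construction is the same as the paper's: an Easton-support product of L\'evy collapses between consecutive points of the Radin club $\overline C$, with the symmetric system generated by the groups fixing initial segments, measurability via ``every set of ordinals is added by a bounded (hence small) piece of the generic'' plus Jech's theorem, and $\DC_{<\kappa}$ from tail-closure. There is, however, one concrete slip that makes your stated conclusion false as written: your product $\prod_{\xi<\kappa}\Col(c_\xi^+,{<}c_{\xi+1})$ begins collapsing only \emph{above} $c_0^+=(\min\overline C)^+$, so nothing below $c_0$ is touched. But $c_0$ is a successor point of the Radin club and hence remains inaccessible in $W_1$ (indeed it was a measure-one point in $V$, so there are also many $V$-inaccessibles below it, all of which survive into $V_\kappa^{W_1}=V_\kappa^{V[C_G]}$ and into your $W_2$). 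Thus in your model $\kappa$ is not the least inaccessible cardinal --- $c_0$ is, along with everything inaccessible below it. The paper avoids this by enumerating $C=\overline C\cup\{\omega\}$ rather than $\overline C$ itself, so that the first factor is $\Col(\omega_1,{<}\min\overline C)$ and $\min\overline C$ becomes $\omega_2$. This is a one-line fix, but it is exactly the point of the indexing and your write-up omits it.

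Two smaller remarks. First, your claim that ``the regular cardinals of $W_2$ below $\kappa$ are those of $W_1[H]$'' is what actually does the work in showing no inaccessible survives; it needs the observation that any cofinalizing function for a cardinal below $\kappa$ is added by a bounded piece of the generic, which lies in $W_2$ --- you have the ingredients but should say this explicitly. Second, the ``compound small-forcing analysis'' you flag as the main obstacle is lighter than you suggest: the paper simply notes that a symmetric set of ordinals in $W_2$ is added over $W_1$ by the bounded product $\mathbb P_{<\alpha}$, which has size $<\kappa$ in $W_1$, and then composes with the already-proved lemma that symmetric sets of ordinals in $W_1$ are added by a $<\kappa$-sized forcing over $V[\overline C]$; no genuinely new interleaving of the Krueger factorisation with the Easton factorisation is needed.
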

\begin{proof}
  Recall that every successor point in $\overline C\subseteq\kappa$, the Radin club, is regular and every limit point is singular in $W_1$. Since $V[\overline C]\subseteq W_1$, we can use that to define the symmetric extension. Let $C=\overline C\cup\{\omega\}$ be enumerated as $\{\rho_\alpha\mid\alpha<\kappa\}$, and define $\mathbb{P}$ to be the Easton-support product $\prod_{\alpha<\kappa}\Col(\rho_\alpha^+,{<}\rho_{\alpha+1})$. Note that $\mathbb{P}\subseteq V_\kappa^{W_1}=V_\kappa^{V[C_G]}$, so all of its initial segments are well-orderable, and behave as expected. We will also write $\mathbb P_{\leq\delta}$ ($\mathbb P_{<\delta}$) and $\mathbb P^{>\delta}$ ($\mathbb P^{\geq\delta}$) to indicate the factorization of $\mathbb P$ into the initial segment of the product up to $\delta$ and its remainder.

  Let us claim first that in the generic extension the cardinals below $\kappa$ are exactly the cardinals $\{\rho_{\alpha} \mid \alpha < \kappa\} \cup \{(\rho_{\alpha}^{+})^{W_1} \mid \alpha < \kappa\}$.
  Clearly, every other cardinal is collapsed, so we need to show that those cardinals are not collapsed. Let $\mu = \rho_\alpha^{+}$. If $\mu$ is collapsed, then its cofinality must be $\leq \rho_\alpha$. If $\rho_\alpha$ is regular, then $\alpha$ is non-limit and a successor ordinal. So the forcing $\mathbb{P}_{<\alpha}$ is $\rho_\alpha$-c.c. If $\alpha$ is a limit ordinal, then $\rho_\alpha$ is singular in $W_1$ and thus the new cofinality of $\mu$ must be strictly below $\rho_\alpha$. Fix $\beta < \alpha$ successor ordinal such that $\cf \mu \geq \rho_{\beta}^+$. Now, the forcing $\mathbb P^{\geq \beta}$ is $\rho_{\beta}^{+}$-closed and thus cannot change $\mu$-s cofinalty to be less than $\rho_\beta^{+}$. The forcing $\mathbb P_{\leq\beta}$ $\rho_{\beta}$-c.c.\ and thus cannot change the cofinality of $\mu$ over the generic extension by $\mathbb P^{\geq \beta}$, as it is $\geq \rho_{\beta}^{+}$ as this intermediate model.

  Our group, $\cG$, is the Easton-support product of $\operatorname{Aut}(\Col(\rho_\alpha^+,{<}\rho_{\alpha+1}))$, acting pointwise on $\mathbb P$. The filter is generated by $\fix(\alpha)=\{\pi\in\cG\mid\pi\restriction\mathbb P_{<\alpha}=\id\}$ for $\alpha<\kappa$. Let $W_2$ be the symmetric extension of $W_1$, given by some generic filter.

  It is a standard argument that $\mathbb P$ is homogeneous, that every proper initial segment of the generic is hereditarily symmetric, and that every set of ordinals in the symmetric extension is added by a proper initial segment of the generic. In particular, Jech's theorem applies and $\kappa$ remains measurable in $W_2$.

  Let $\alpha<\kappa$ be an uncountable regular cardinal in $W_2$. As the regular cardinals in $W_2$ below $\kappa$ are the same as the regular cardinals in the generic extension of $W_1$, no uncountable regular cardinal below $\kappa$ is a limit cardinal, so $\kappa$ is the least inaccessible cardinal.
\end{proof}

\begin{theorem}
$W_2\models\DC_{<\kappa}$.
\end{theorem}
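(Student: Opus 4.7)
The plan is to mirror the closure argument from the previous section and show that $W_2$ is $\kappa$-closed in the $\ZFC$ ambient model $V[C_G][G_\mathbb{P}]$, where $G_\mathbb{P}$ is the full generic for $\mathbb{P}$ over $W_1$. Once established, the conclusion follows immediately from \cite[Lemma~3.2]{Karagila:DC}, exactly as in the $W_1$ case.

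Fix $\delta < \kappa$ and a sequence $\vec{f} = \langle f_\beta \mid \beta < \delta\rangle \in V[C_G][G_\mathbb{P}]$ with $f_\beta \in W_2$ for each $\beta$; the goal is to produce a hereditarily symmetric $\mathbb{P}$-name in $W_1$ for $\vec{f}$. Each $f_\beta$ is (coded by) a set of ordinals in $W_2$, so by the absorption observation in the proof of \cref{thm:first-inaccessible} there is $\alpha_\beta < \kappa$ with $f_\beta \in W_1[G_\mathbb{P} \restriction \mathbb{P}_{<\alpha_\beta}]$; any $\mathbb{P}_{<\alpha_\beta}$-name $\dot{f}_\beta \in W_1$ for $f_\beta$, viewed as a $\mathbb{P}$-name, is hereditarily symmetric with $\fix(\alpha_\beta) \subseteq \sym(\dot{f}_\beta)$. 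Since $\mathbb{P}$ is $\kappa$-c.c.\ in $V[C_G]$ by the standard Easton argument, $\kappa$ remains regular in $V[C_G][G_\mathbb{P}]$, so $\alpha^* := \sup_{\beta < \delta}\alpha_\beta < \kappa$. After enlarging $\alpha^*$ if needed so that $\delta \le \rho_{\alpha^*}$, the tail $\mathbb{P}^{\ge \alpha^*}$ is $\rho_{\alpha^*}^+$-closed over $V[C_G][G_{<\alpha^*}]$, hence adds no new $\delta$-sequences of elements of that model, placing $\vec{f}$ already in $V[C_G][G_{<\alpha^*}]$.

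It then remains to extract a hereditarily symmetric $W_1$-name. Following the reduction used in the $W_1$-closure lemma, I would work inside $V[C_G]$: using a wellordering of the hereditarily symmetric $\mathbb{P}_{<\alpha^*}$-names in $W_1$ together with the chain condition of $\mathbb{P}_{<\alpha^*}$ in $V[C_G]$, extract from a $V[C_G]$-name for $\vec{f}$ a function $\beta \mapsto \dot{f}_\beta$ of symmetric names whose graph lies in $V[C_G]$; then invoke the $\kappa$-closure of $W_1$ in $V[C_G]$ from the previous section to place this sequence already inside $W_1$. The canonical name $\dot{\vec{f}} = \{\langle 1_\mathbb{P}, \mathrm{op}(\check{\beta}, \dot{f}_\beta)\rangle \mid \beta < \delta\}$ is then hereditarily symmetric with $\fix(\alpha^*) \subseteq \sym(\dot{\vec{f}})$ and evaluates to $\vec{f}$, completing the $\kappa$-closure argument.

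The main obstacle is the last reduction: moving from a sequence of symmetric names which a priori exists only in $V[C_G][G_\mathbb{P}]$ (where choice is available to pick specific names for each $f_\beta$) to a sequence already present in $V[C_G]$, so that the $\kappa$-closure of $W_1$ in $V[C_G]$ can be applied. This rests on carefully combining the $\kappa$-chain condition of $\mathbb{P}$ in $V[C_G]$, the fact that $\mathbb{P}_{<\alpha^*}$ lies in $V_\kappa^{W_1} = V_\kappa^{V[C_G]}$ and is therefore of size $<\kappa$ and wellorderable, and the $\kappa$-closure of $W_1$ in $V[C_G]$ from the previous section.
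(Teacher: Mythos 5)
Your route is genuinely different from the paper's. The paper never tries to show that $W_2$ is $\kappa$-closed in an ambient $\ZFC$ model. Instead, for each successor $\alpha$ with $\delta=\rho_\alpha$ it factors the symmetric system itself along $\mathbb P_{<\alpha}\times\mathbb P^{\geq\alpha}$, observes that $\fix(\alpha)$ fixes $\mathbb P_{<\alpha}$ pointwise so that the lower generic is symmetric and $W_2$ is a \emph{generic} extension, by a forcing of size $\delta$, of the symmetric extension $W_{2,\alpha}$ given by the tail system; it then gets $W_{2,\alpha}\models\DC_\delta$ from \cite[Lemma~3.1]{Karagila:DC} (the tail is $\delta^+$-closed and the filter $\delta^+$-complete over $W_1\models\DC_\delta$) and transfers $\DC_\delta$ to $W_2$ by the small-forcing preservation theorem of Gitman--Johnstone. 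This argument stays entirely inside $W_1$ and its symmetric extensions and requires no ambient model of choice above $W_2$.

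As written, your approach has genuine gaps. First, the claim that $\mathbb P$ is $\kappa$-c.c.\ ``by the standard Easton argument'' is unjustified: that argument is a $\Delta$-system argument which needs $\kappa$ to be Mahlo, and here $\kappa$ is non-Mahlo by design ($\acc\overline C$ is a club of singulars). You only use this to bound $\sup_\beta\alpha_\beta$, and regularity of $\kappa$ in the full extension can be recovered from the factoring $\mathbb P_{<\alpha}\times\mathbb P^{\geq\alpha}$ instead, so this is repairable but must not be waved at. Second, $\kappa$-closure requires handling arbitrary functions $\delta\to W_2$, not just sequences of sets of ordinals: elements of $W_2$ need not be codable by sets of ordinals within $W_2$, so the entire argument has to be run on names, as the paper does in its closure lemma for $W_1$. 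Third, the step you yourself flag as ``the main obstacle''---producing a single hereditarily symmetric name in $W_1$ for the whole sequence, starting from data that a priori lives only in $V[C_G][G_{\mathbb P}]$---is the actual mathematical content of the statement and is not carried out; it also presupposes that $G_{\mathbb P}$ is generic over $V[C_G]$ (otherwise $V[C_G][G_{\mathbb P}]$ is not a $\ZFC$ model to which \cite[Lemma~3.2]{Karagila:DC} applies), something the paper's argument never needs to arrange. If completed, your argument would yield more than the paper's proof (for instance $\kappa$-closure of $W_2$ and hence $V_\kappa^{W_2}\models\ZFC$), but as it stands the decisive step is missing.
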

\begin{proof}
  Working in $W_1$, given any successor ordinal $\alpha<\kappa$ let $\delta=\rho_\alpha$. Decompose $\mathbb P$ into $\mathbb{P}_{<\alpha}\times\mathbb{P}^{\geq\alpha}$. Then $W_1\models|\mathbb{P}_{<\alpha}|=\delta$ and $\mathbb P^{\geq\alpha}$ is $\delta^+$-closed. Moreover, we can naturally decompose the symmetric system itself into a product of symmetric systems given by these two component. Since $\mathbb P_{<\alpha}$ is fixed pointwise, the restriction of the generic filter to that part belongs to the symmetric extension $W_2$. Therefore $W_2$ is the \emph{generic} extension of $W_{2,\alpha}$, the symmetric extension given by $\mathbb P^{\geq\alpha}$ component.

  Since $W_1\models\DC_\delta$ and $\mathbb P^{\geq\alpha}$ is a $\delta^+$-closed with the filter being $\delta^+$-complete, we get by \cite[Lemma~3.1]{Karagila:DC} that $W_{2,\alpha}\models\DC_\delta$. Finally, by \cite[Theorem~2.1]{GitmanJohnstone}, we get that $W_2\models\DC_\delta$ as well. As this holds for unboundedly many $\delta<\kappa$, $W_2\models\DC_{<\kappa}$.
\end{proof}
\section{Lower bounds on the consistency strength}
As with many similar results, one is left to wonder if the use of supercompactness is truly necessary, at least in terms of consistency strength. The trivial lower bound of a single measurable cardinal was improved by the second and third authors in \cite[Theorem~3.6]{SmallMeasurableCardinals} to show that in the core model, a cardinal with Mitchell order $2$ must exist. In this section we improve this result.

Throughout this section, $o(\alpha)$ denotes the Mitchell order of $\alpha$ and $K$ is Mitchell's core model for the anti-large cardinal hypothesis ``there is no $\alpha$ such that $o(\alpha) = \alpha^{++}$''. \footnote{Slightly modified argument can be phrased under the more permissive hypothesis of ``there is no inner model with a strong cardinal'', but as our current result is much weaker than that, there is no need to weaken the hypothesis in this direction.} The proof of the theorem relies heavily on Mitchell's covering lemma \cite[Theorem~4.19]{MitchellHandbook}. We assume that the reader is familiar with the basic definitions and theorems of \cite{MitchellHandbook}.

\begin{theorem}[$\ZF$]\label{thm:lower-bound}
  Let $\kappa$ be a strongly inaccessible non-Mahlo measurable cardinal, then $K^{\HOD}\models o(\kappa)\geq\kappa+1$.
\end{theorem}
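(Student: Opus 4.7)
The plan is to argue by contradiction: suppose $K^{\HOD}\models o(\kappa)\leq\kappa$. The target is to exhibit in $K^{\HOD}$ a normal measure $U_0$ on $\kappa$ for which $\{\alpha<\kappa:o^{K^{\HOD}}(\alpha)\geq\alpha\}\in U_0$; by {\L}o\'s and normality this forces $o^{\Ult(K^{\HOD},U_0)}(\kappa)\geq\kappa$, so $U_0$ sits at Mitchell rank $\geq\kappa$ on the coherent sequence of $K^{\HOD}$, yielding $o^{K^{\HOD}}(\kappa)\geq\kappa+1$ and a contradiction.

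To produce $U_0$, I would fix a $\kappa$-complete ultrafilter $U\in V$ on $\kappa$ and form the ultrapower $j_U\colon V\to M$ with $\crit(j_U)=\kappa$. Restriction gives elementary embeddings $\HOD\to\HOD^M$ and, by absoluteness of the core-model construction, $K^{\HOD}\to(K^{\HOD})^M$, so that $U_0=\{A\in\power(\kappa)\cap K^{\HOD}:\kappa\in j_U(A)\}$ is a normal $\kappa$-complete ultrafilter on $K^{\HOD}$, initially defined externally in $V$. Under the contradictory hypothesis $o^{K^{\HOD}}(\kappa)\leq\kappa$, Mitchell's coiteration argument terminates and places $U_0$ on the $K^{\HOD}$-sequence, whence $U_0\in K^{\HOD}$. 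For the first-order content, non-Mahloness of $\kappa$ in $V$ supplies a club $C\subseteq\kappa$ of $V$-singular cardinals; taking the $\HOD$-least such club we may assume $C\in\HOD$, and each $V$-cardinal in $C$ remains a cardinal in $K^{\HOD}$. For $\alpha\in C$ that is still $K^{\HOD}$-regular, Mitchell's covering lemma (\cite[Theorem~4.19]{MitchellHandbook}) forces a covering failure, yielding $o^{K^{\HOD}}(\alpha)\geq\cf^V(\alpha)$; iterating the analysis along cofinal segments of $C\cap\alpha$ and invoking coherence of the measure sequence of $K^{\HOD}$, one upgrades this to $o^{K^{\HOD}}(\alpha)\geq\alpha$ on a $U_0$-large set.

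The main obstacle I anticipate is this final bootstrapping step. A single application of Mitchell's covering at a given $\alpha$ yields only $o^{K^{\HOD}}(\alpha)\geq\cf^V(\alpha)$, which can be far smaller than $\alpha$; upgrading to $o^{K^{\HOD}}(\alpha)\geq\alpha$ requires a simultaneous analysis of covering failures along a cofinal sequence of singulars below $\alpha$---exactly what non-Mahloness together with reflection through $U_0$ provides---plus coherence to piece the resulting measures into a sequence of length $\alpha$. A secondary technical point is the placement of $U_0$ on the $K^{\HOD}$-sequence, which relies essentially on the contradictory hypothesis on the Mitchell order and on the permissive anti-large-cardinal hypothesis governing $K$; one may need to first establish that $\HOD$ itself sees $\kappa$ as a cardinal with sufficient reflection (for instance, as a critical cardinal in the sense of \cite{CriticalCardinals}) in order to invoke Mitchell's covering cleanly within the ambient $\HOD$ context rather than only externally in $V$.
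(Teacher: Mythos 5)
There are genuine gaps here, and the two most serious ones are exactly the points where the $\ZF$ hypothesis and the quantitative heart of the theorem live. First, the set-up. You propose to form the ultrapower $j_U\colon V\to M$ and restrict it to $\HOD$; but the theorem is proved over $\ZF$, where {\L}o\'s' theorem fails for the ultrapower of $V$ and no such embedding is available. You also propose to take ``the $\HOD$-least club of $V$-singular cardinals'' so as to assume $C\in\HOD$; there is no reason such a club exists in $\HOD$ at all --- $\HOD$ may well think $\kappa$ is Mahlo, since ``singular in $V$'' is not something $\HOD$ can see. The paper's fix for both problems is the same move: by Vop\v{e}nka's theorem $C$ and $U\cap\HOD[C]$ are set-generic over $\HOD$, so $K^{\HOD}=K^{\HOD[C]}=K^{\HOD[C][U\cap\HOD[C]]}$ by generic absoluteness of $K$, and one takes the ultrapower not of $V$ but of the $\ZFC$-model $M=\HOD[C]$, which $U$ measures completely. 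Everything downstream (the derived measure $D$, the fact that $D\cap K\in K$ by maximality) is carried out relative to $M$, not $\HOD$ or $V$.

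Second, and more importantly, the step you flag as ``the main obstacle'' is in fact the entire content of the theorem, and your proposed bridge --- iterate the covering analysis along cofinal segments and ``invoke coherence'' --- is not an argument. A single covering failure at $\alpha$ gives far less than $o^K(\alpha)\geq\alpha$, and coherence of the $K$-sequence does not let you stack such estimates. What the paper actually does is define the derived clubs $C_0=C$, $C_{\alpha+1}=\acc C_\alpha$, $C_\alpha=\bigcap_{\zeta<\alpha}C_\zeta$ at limits, and prove by induction on $\alpha$ that every $M$-regular $\zeta\in C_\alpha$ satisfies $K\models o(\zeta)\geq\alpha$. The successor step is the crux: one singularizes $\zeta$ generically, covers a small hull $W\prec H(\lambda)$ by a weak Prikry--Magidor sequence $I$ together with a function $g\in K$, and then shows, using the club $D_g$ of closure points of $g$, that cofinally many \emph{successor} points of $I$ must land in $C_\alpha\cap A$ and be $M$-regular, hence have Mitchell order $\geq\alpha$ by induction --- contradicting Mitchell's theorem that if $o(\zeta)\leq\alpha$ then almost every point of $I$ has order $<\alpha$. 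This combinatorial interaction between the derivative hierarchy of $C$ and the indiscernible structure of the covering sequence is what upgrades the trivial bound, and it is absent from your proposal. (Your final step, placing a normal measure concentrating on $\{\alpha:o(\alpha)\geq\alpha\}$ onto the $K$-sequence, is close in spirit to the paper's use of maximality of $K$ to get $D\cap K\in K$ and of $\Ult(M,U)\models o^K(\kappa)\geq\kappa$; that part would go through once the induction is in place.)
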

\begin{proof}
Let $\kappa$ be a measurable cardinal, and let $U$ be a $\kappa$-complete ultrafilter on $\kappa$.\footnote{By \cite{BilinskyGitik}, it might be that $\kappa$ does not carry any normal measures, which is why we cannot assume $U$ is normal.} Let $C \subseteq \kappa$ be a club of singular cardinals.

\begin{lemma}
$K^{\HOD} = K^{\HOD[C]} = K^{\HOD[C][U \cap \HOD[C]]}$.
\end{lemma}
\begin{proof}
By Vop\v{e}nka's theorem \cite{Vopenka} (see also \cite[Theorem~15.46]{JechSetTheory}), every set or ordinals is generic over $\HOD$. Since $K$ is generically absolute, $K^{\HOD[C]} = K^{\HOD}$. Applying this to the set $U \cap \HOD[C]$ (or rather to a set of ordinals encoding it, which exists as $(\power(\kappa))^{\HOD[C]}$ is well orderable) we obtain the second equality.
\end{proof}

Let $K = K^{\HOD}$. Let $M=\HOD[C]$. The ultrafilter $U$ measures every set in $M$ and thus we can define in $V$ and elementary embedding $j \colon M \to N$, with critical point $\kappa$ (using the fact that $M$ is a model of $\ZFC$). Using this $j$ we can derive an $M$-normal measure on $\kappa$, $D = \{A \subseteq \kappa \mid A \in M,\, \kappa \in j(A)\}$, containing every club from $M$.

Next, since $\kappa$ is regular, $N \models``\kappa$ is regular''. Therefore, the set of all $M$-regular cardinals below $\kappa$ must be in $D$ and in particular, $C$ must contain cardinals which are regular in $M$ and thus in $K$, but singular in $V$.

Finally, since $D\in M[U\cap M]$, by the maximality of $K$, the $K$-normal measure $D \cap K$ belongs to $K$.

Let us denote by $A = \Reg^M\cap \kappa = \{\zeta < \kappa \mid M\models \zeta\text{ is regular}\}$. In \cite{SmallMeasurableCardinals} the argument was that if $\zeta\in A\cap C$, it must be measurable in $K$. The reason is that $\zeta$ is singular in $V$, so we can find some $t\subseteq\zeta$ witnessing that and add it generically to $M$. Since, as in the lemma above, $K^{M[t]}=K$, by the covering lemma we get that $\zeta$ must be measurable in $K$. So, the set of all $K$-measurable cardinals belongs to $D$ and thus $K\models o(\kappa) \geq 2$. By conducting a much more careful analysis of the covering models of $K$ we will see that $o(\kappa)$ must be much higher.

We define a sequence of clubs:
\begin{enumerate}
\item $C_0 = C$,
\item $C_{\alpha + 1} = \acc C_\alpha$,
\item for limit $\alpha$, $C_\alpha = \bigcap_{\zeta < \alpha} C_{\zeta}$.
\end{enumerate}
So $\langle C_\alpha \mid \alpha < \kappa\rangle$ is the sequence of derivatives of $C$ up to $\kappa$.
\begin{lemma}
For every $\zeta \in C_\alpha \cap A$, $K\models o(\zeta) \geq \alpha$.
\end{lemma}
\begin{proof}
We prove the lemma by induction on $\alpha$. For $\alpha = 0$ the claim is trivial, and for limit $\alpha$ it follows easily from the definition of $C_\alpha$.

Let us argue for successor ordinal. Let $\zeta \in C_{\alpha+1} \cap A$ (there is such $\zeta$, since as a club $C_{\alpha+1}\in D$). In particular, $\zeta \in C$ and thus it is singular in $V$. Let $t \subseteq \zeta$ be a cofinal sequence witnessing it with $\otp t = \cf^V \zeta$, as before $K^{M[t]} = K$.

In $M[t]$, let $W \prec H(\lambda)$, for some $\lambda > \kappa$, with $|W|<\zeta$ containing $\alpha \cup \{t, C\}$ (in particular, $C_\alpha \in W$). By Mitchell's covering lemma \cite[Theorem~1.8]{MitchellHandbook} there is a weak Prikry-Magidor sequence $I \subseteq \zeta$ and $g \colon \zeta \to \zeta$ in $K$ such that $W \cap \zeta = \bigcup_{\xi \in I} (g(\xi) \setminus \xi)\cup \rho$ for some $\rho < \zeta$. By replacing $g$ with $\alpha\mapsto \sup \{g(\beta) \mid \beta \leq \alpha\} + \alpha$, if necessary, we may assume that $g$ is strictly increasing. By the definition of a weak Prikry-Magidor sequence and by shrinking $I$, if necessary, we may also assume that $\forall\xi< \zeta, g(\xi) < \min (I\setminus (\xi + 1))$.

As the function $g$ belongs to $K$, the club, $D_g = \{\eta < \zeta \mid g\image \eta \subseteq \eta\}$ is in $K$ and in particular, in $M$. Since $\zeta$ is regular in $M$, $C_\alpha \cap D_g$ is still a club of order type $\zeta$.

If $o(\zeta) \leq \alpha < \zeta$, then for almost every $c\in I$, $o(c) < \alpha$, by \cite[Theorem~1.5]{MitchellHandbook}. We will show that this is not the case.
\begin{claim}\label{claim:successors-points-of-I}
If $c$ is a successor in $I$ and a singular ordinal in $M$, then $o(c) \geq \alpha$.
\end{claim}
\begin{proof}
As $I$ is a weak Prikry-Magidor sequence, almost every element of $I$ is regular in $K$ using \cite[Theorem~1.7]{MitchellHandbook}, taking the large set of $K$-regular cardinals. If $c$ is singular in $M$, then $W$ must contain a sequence $s$ witnessing that and by its regularity in $K$, we may assume that $s$ is a weak Prikry-Magidor sequence as well. If $o(c) \leq \alpha$, then $\otp s \leq \omega^\alpha$, using Theorem~1.7 of \cite{MitchellHandbook} and induction on the Mitchell order. Thus it is fully contained in the model $W$. As the weak Prikry-Magidor sequence itself is $g \restriction c$-indiscernible (as it eventually enters every $K$-club and $g \restriction c$ is a function from $c$ to itself), we conclude that $I \cap c$ is cofinal in $c$.
\end{proof}

So, without loss of generality, any $c \in I\setminus\acc I$ is regular in $M$. We will show that cofinally many successor point $c\in I$ are in $C_\alpha$, and thus $o(c) \geq \alpha$.

Let $\gamma < \zeta$ be an arbitrary ordinal. Since $|C_\alpha \cap D_g|^M=\zeta > |I|^M$, we may pick $\delta\in (C_\alpha \cap D_g \setminus \gamma) \setminus I$ and $c = \min (I \setminus \delta)$. So, $c$ must be a successor element of $I$ and thus (by our assumption) regular in $M$. Let us show that $c$ must be in $C_\alpha$.

Otherwise, let $\eta = \max(C_\alpha \cap c) \in W$. Clearly, $\eta \geq \delta$, as $\delta \in C_\alpha$. By the defining property of $g$, if $\gamma$ is sufficiently large, $\eta \in \bigcup_{u\in I} g(u)\setminus d$. So, there must be $u < c$ in $I$ such that $\eta < g(u)$. Since $\delta \notin I$, $u < \delta$. But $\delta$ belongs to $D_g$, and so $g(u) < \delta \leq \eta$, a contradiction.

So, $c \in C_\alpha \cap A$ and by the inductive hypothesis, $K\models o(c) \geq \alpha$.
\end{proof}
This implies that $K\models o(\kappa)\geq\kappa$. In order to show that $K\models o(\kappa) \geq \kappa+1$, it is enough to show that $\Ult(M,U)\models o^K(\kappa) \geq \kappa$. Indeed, in this model $\kappa$ is regular and belongs to $j(C_\alpha)$ for every $\alpha < \kappa$, so in $\Ult(M,U)$, $o^K(\kappa)\geq\kappa$.
\end{proof}

\begin{remark}
Why are we not continuing the proof to obtain $o(\kappa)\geq\kappa+2$, or even higher? Unfortunately, the above proof will fail. The proof of \cref{claim:successors-points-of-I} relies on the fact that $\alpha$ is covered by the small model, $W$. Once $o(\kappa)\geq\kappa$ is reached, the claim can no longer work, since the covering model is not small. We are then allowed one more step, to obtain $o(\kappa)\geq\kappa+1$ by using the ultrapower by $U$.
\end{remark}

\begin{question}
\begin{enumerate}
\item Can the lower bound be improved?
\item What is the exact consistency strength of a strongly inaccessible non-Mahlo measurable cardinal?
\end{enumerate}
\end{question}

The proof, as written here, depends on the fact that $\kappa$ is Mahlo in $M$ and there is a club $C \in M$ such that every element of $C$ can be singularized in a generic extension of $M$. By a slight modification of the argument, one can provide the following better formulation.
\begin{theorem}
Assume that there is no inner model of $\exists \alpha,\, o(\alpha) = \alpha^{++}$. Let $\kappa$ be an inaccessible cardinal such that there is a club $C\subseteq \kappa$ through the singular cardinals below $\kappa$ and there is a forcing notion singularizing $\kappa$ while preserving strong limitness. Then $K\models o(\kappa) \geq \kappa$.
\end{theorem}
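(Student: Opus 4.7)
My plan is to mimic the proof of \cref{thm:lower-bound}, letting the generic extension by the singularizing forcing $\mathbb{P}$ replace the ultrapower by the measure $U$ that drove the previous argument. The preliminary setup transfers unchanged: set $M = \HOD^V[C]$ (so that $K^M = K$ by Vop\v{e}nka's theorem and the set-forcing absoluteness of $K$), define the iterated accumulation sequence $\langle C_\alpha \mid \alpha < \kappa\rangle$ of the given club $C$ of $V$-singular cardinals below $\kappa$, and let $A = \Reg^M \cap \kappa$. The inductive lemma ``$\zeta \in C_\alpha \cap A$ implies $K \models o(\zeta) \geq \alpha$'' is then obtained verbatim from \cref{thm:lower-bound}, since its only external input is that $\zeta$ is singular in $V$, which is guaranteed by $\zeta \in C$.

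The key modification concerns the last step of \cref{thm:lower-bound}, in which the $K$-normal measure $D \cap K$ on $\kappa$ was used to push the lemma up to $o^K(\kappa) \geq \kappa$; since $\kappa$ is not assumed measurable here, we invoke $\mathbb{P}$ instead. Fix a $\mathbb{P}$-generic $G$ over $V$, noting that $K^{V[G]} = K$ because the anti-large-cardinal hypothesis is preserved by set forcing, and let $t \in V[G]$ witness $\cf^{V[G]}(\kappa) = \mu < \kappa$. Assuming toward contradiction that $o^K(\kappa) = \gamma < \kappa$, take an elementary submodel $W \prec H(\lambda)^{V[G]}$ of cardinality strictly less than $\kappa$ with $\gamma \cup t \subseteq W$ and $C, C_\gamma \in W$, so that $W \cap \kappa \supseteq t$ is cofinal in $\kappa$. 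Applying Mitchell's covering lemma in $V[G]$ yields a weak Prikry-Magidor sequence $I \subseteq \kappa$ in $K$ for the $K$-measure sequence at $\kappa$, together with a strictly increasing $g \in K$ with $W \cap \kappa = \bigcup_{\xi \in I}(g(\xi)\setminus \xi) \cup \rho$ and $g(\xi) < \min(I \setminus (\xi+1))$; cofinality of $W \cap \kappa$ forces $I$ cofinal in $\kappa$, while $|I| < \kappa$.

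The cofinality/pigeonhole argument of \cref{thm:lower-bound} then runs verbatim. By \cite[Theorem~1.5]{MitchellHandbook} applied to $\kappa$, almost every $c \in I$ satisfies $o^K(c) < \gamma$; \cref{claim:successors-points-of-I} rules out $M$-singular successor points of $I$, so every successor $c \in I$ lies in $A$; and since $|C_\gamma \cap D_g|^V = \kappa > |I|^V$, the argument based on $\max(C_\gamma \cap c) \in W$ produces cofinally many successor $c \in I$ in $C_\gamma \cap A$. For each such $c$ the inductive lemma gives $o^K(c) \geq \gamma$, contradicting the Mitchell bound and forcing $o^K(\kappa) \geq \kappa$. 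The main obstacle to watch is the legitimacy of the strong version of Mitchell's covering in $V[G]$ relative to $K$: this rests on the absoluteness $K^{V[G]} = K^V$ (granted by the preservation of the anti-large-cardinal hypothesis under set forcing) together with the fact that $\kappa$ remains $K$-regular, being $V$-inaccessible.
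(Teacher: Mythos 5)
Your proposal is correct and its core move is exactly the paper's: replace the ultrapower of \cref{thm:lower-bound} by the generic singularization of $\kappa$, put the witnessing sequence $t$ into $W\prec H(\lambda)^{V[G]}$ so that the covering system $I$ is forced to be cofinal in $\kappa$ while $|I|<\kappa$, and then run the $D_g$-pigeonhole to find cofinally many successor points of $I$ inside the club, contradicting the bound $o^K(c)<\gamma$ for almost all $c\in I$ coming from \cite[Theorem~1.5]{MitchellHandbook}. The one place you diverge is in how the lower bound $o^K(c)\geq\gamma$ is supplied at those points: you route through the derived clubs $C_\gamma$, the set $A=\Reg^M\cap\kappa$ for $M=\HOD[C]$, and the inductive lemma, splitting the successor points of $I$ into $M$-regular ones (handled by the induction) and $M$-singular ones (handled by \cref{claim:successors-points-of-I}). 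The paper's own proof is leaner: it works with the original club $C$ only and observes that any successor point of $I$ landing in $C$ is already singular in the model over which the covering is computed, so \cref{claim:successors-points-of-I} alone gives $o^K(c)\geq\gamma$ and the whole $C_\alpha$/$A$ apparatus is not needed for this theorem. Your heavier route still closes, since every successor point of $I$ found in the club falls into one of your two cases, so the difference is stylistic rather than substantive; what your version buys is robustness in a setting where the ambient model of the covering does not itself witness the singularity of the points of $C$ (as in the $\HOD[C]$ setting of \cref{thm:lower-bound}), while the paper's version exploits the fact that here it does.
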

\begin{proof}
Let $t$ be a short cofinal sequence at $\kappa$ and let $W \prec H(\lambda)$ contain $t, C$ and $\alpha$ for some $\alpha < \kappa$. Let us assume, towards a contradiction, that $o(\kappa) < \alpha$.

Let $I$ be a weak Prikry-Magidor sequence for $W \cap \kappa$ as before, so \[W \cap \kappa \subseteq \rho \cup \bigcup_{\zeta \in I} g(\zeta) \setminus \zeta,\, \rho < \kappa,\, g\in K.\] By repeating \cref{claim:successors-points-of-I}, a successor element of $I$ must either be of Mitchell order above $\alpha$, or regular in $M$.

Let us argue that there are unboundedly many elements of $C$ are successor elements of $I$, so they are singular in $M$. Indeed, exactly as before, for arbitrary $\gamma$ we pick $\delta \in (C \cap D_g \setminus \gamma) \setminus I$ and set $c = \min (I \setminus \delta)$. We claim that $c\in C$.

Otherwise, let $\eta = \max (C \cap c)$ and even though $\eta \in W$, $\eta$ is not covered by $g$ using any smaller element of $I \cap c$ and $\rho$, as $\sup (I \cap c) \leq\delta \leq \eta$, and the supremum is not obtained.
\end{proof}

In \cite{BGH}, the problem of the consistency strength of embedding the forcing for shooting a club through the singular cardinals into a tree Prikry forcing was studied. The consistency strength of this situation was bounded from below by $o(\kappa) \geq \kappa^{+}+1$ and from above by a slight strengthening of a superstrong cardinal. Even though it is unclear whether the construction of \cite{BGH} can be used in our situation, it seems reasonable that a similar method might be used in order to obtain a model with a non-Mahlo measurable cardinal. Thus we conjecture that the consistency strength of \cref{thm:non-mahlo} can be further reduced and that the lower bounds for \cref{thm:lower-bound} are not optimal either.
\section*{Acknowledgements}
The authors would like the thank the anonymous referee for their careful reading of this paper and their helpful remarks. The first author was partially supported by ISF grant No 882/22. The second author was partially supported by the Israel Science Foundation grant 1967/21. The third author was supported by UKRI Future Leaders Fellowship [MR/T021705/2]. No data are associated with this article.
\bibliographystyle{amsplain}
\providecommand{\bysame}{\leavevmode\hbox to3em{\hrulefill}\thinspace}
\providecommand{\MR}{\relax\ifhmode\unskip\space\fi MR }
\providecommand{\MRhref}[2]{%
  \href{http://www.ams.org/mathscinet-getitem?mr=#1}{#2}
}
\providecommand{\href}[2]{#2}

\end{document}